\numberwithin{equation}{section}
\numberwithin{equation}{section}
\newcommand{\beq}{\begin{equation}}
\newcommand{\eeq}{\end{equation}}
\newcommand{\beqs}{\begin{eqnarray*}}
\newcommand{\eeqs}{\end{eqnarray*}}
\newcommand{\beqn}{\begin{eqnarray}}
\newcommand{\eeqn}{\end{eqnarray}}
\newcommand{\beqa}{\begin{array}}
\newcommand{\eeqa}{\end{array}}
\def\x{{\mathbf{x}}}
\def\n{{\mathbf{n}}}
\newtheorem{prop}{Proposition}[section]
\newtheorem{theo}[prop]{Theorem}
\newtheorem{lem}[prop]{Lemma}
\newtheorem{cor}[prop]{Corollary}
\title{ Self-shrinkers with bounded HA}
\author{Zhen Wang 
}
\begin{document}
\bibliographystyle{plain}


\maketitle

\begin{abstract}
    We study integral and pointwise bounds on the second fundamental form of properly immersed self-shrinkers with bounded $HA$. As applications, we discuss gap and compactness results for self-shrinkers. 
\end{abstract}

\tableofcontents

\section{Introduction}
A hypersurface $\Sigma\hookrightarrow\mathbb{R}^{n+1}$ is said to be a self-shrinker if it is the time $t=-1$ slice of a mean curvature flow moving by rescalings with $\Sigma_t=\sqrt{-t}\Sigma$, or equivalently if it satisfies the equation
$$H=\frac{\langle\x,\n\rangle}{2},$$
where $\n$ and $H$ denote the unit normal vector and the mean curvature, respectively. Self-shrinkers play an important role in the study of mean curvature flow, not least because they are models for type-I singularities of the flow by Huisken \cite{[Hu1],[Hu2]}.

It is interesting to compare the $HA$ tensor in mean curvature flow with the Ricci curvature in Ricci flow since they describe the corresponding metric evolution, respectively. In \cite{[CW],[KMW]} Chen-Wang and Kotschwar-Munteanu-Wang showed the Ricci curvature blows up at the rate of type-I at the first finite singularity. In \cite{[LS3]} Sesum proved the type-I blowup of mean curvature at the finite type-I singularity. 

In \cite{[LW2]} Li-Wang studied the flow with type-I mean curvature and confirmed the multiplicity-one conjecture in this case. The present paper follows the method of \cite{[MW]} and can be seen as an attempt to understand more about the asymptotic behaviour of self-shrinkers in terms of $HA$.

Let $f=|x|^2/4$. By integral estimates and the Moser iteration we get the following pointwise growth estimate of the second fundamental form.
\begin{theo}(Theorem \ref{theo:growth rate})
    Let $\x:\Sigma^n\to\mathbb{R}^{n+1}$ be a properly immersed self-shrinker with $\sup_{\Sigma}|HA|\leq K$. Then for any $p>\max\{n,4\}$ there exist positive constants $C=C(n,p,K,\int_{\Sigma}e^{-f},\int_{B(0,r_0)\cap\Sigma}|A|^p)$ where $r_0=c(n,p)(1+K)$ and $a=a(n,p,K)$ such that
    \beqs
    |A|(x)\leq C(|x|+1)^a,\quad \forall\,x\in\Sigma,
    \eeqs
    i.e., the second fundamental form grows at most polynomially in the distance.
\end{theo}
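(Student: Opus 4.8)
The plan is to turn an $L^p$ bound for $|A|$ into an $L^\infty$ bound by a Moser iteration for the drift Laplacian $\mathcal L=\Delta-\tfrac12\langle\x,\Na\rangle$, and to produce the $L^p$ bound on ever larger balls by a dyadic iteration seeded at the scale $r_0$; the hypothesis $\sup_\Sigma|HA|\le K$ is used precisely to tame the wrong-signed quartic term in the Simons identity. First I would assemble the standard self-shrinker toolkit: $\mathcal L f=\tfrac n2-f$ for $f=|\x|^2/4$; the Colding--Minicozzi Simons-type identity $\mathcal L|A|^2=2|\Na A|^2+|A|^2-2|A|^4$ (the precise coefficient of the $|A|^2$ term being immaterial), equivalently $-\mathcal L|A|^2\le 2|A|^2\cdot|A|^2$; the bound $|H|\le|\x|/2$; and the first-order identity $\Na H=-\tfrac12 A(\x^\top)$, which gives $|\Na H|\le\tfrac12|A|\,|\x|$ and hence that $H^2$ varies slowly, $|\Na(H^2)|=2|H|\,|\Na H|\le|HA|\,|\x|\le K|\x|$. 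Since $\Sigma$ is properly immersed it has Euclidean volume growth, $\vol(B(0,R)\cap\Sigma)\le C(n)(1+R)^n$ with $C$ controlled by $\int_\Sigma e^{-f}$ (Ding--Xin, Cheng--Zhou), so $\int_\Sigma e^{-f}<\infty$ and integration by parts against $e^{-f}\,d\mu$ is legitimate; and on an extrinsic ball $B(0,R)\cap\Sigma$ the Michael--Simon inequality gives a Sobolev inequality with constant polynomial in $R$, since there $|H|\le R/2$.

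The Moser step is then routine: on $B(0,R)\cap\Sigma$ one treats $|A|^2\ge 0$ as a subsolution of $-\mathcal L w\le Vw$ with $V=2|A|^2$; the hypothesis $p>n$ gives $V\in L^{p/2}$ with $p/2>n/2$, while $p>4$ gives $|A|^2\in L^{p/2}\subset L^2$, the natural starting integrability, so De Giorgi--Nash--Moser applies (handling the drift term via $|\Na f|\le R/2$ and using the Michael--Simon Sobolev inequality) and yields $\sup_{B(0,R/2)\cap\Sigma}|A|^2\le C$, with $C$ depending on $n$, $p$, on $R$ polynomially, and on $\int_{B(0,R)\cap\Sigma}|A|^p$. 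Everything reduces to bounding $\int_{B(0,R)\cap\Sigma}|A|^p$ polynomially in $R$. Here one first uses $|HA|\le K$ to split off $\{|A|\le K\}$, which contributes at most $K^p\vol(B(0,R)\cap\Sigma)\le CK^p(1+R)^n$; the remaining set $\{|A|>K\}$ lies in $\{H^2<1\}$, the ``radial'' region where $\Sigma$ is nearly a minimal cone. On it I would run an integral estimate from the Simons identity with an annular cutoff $\phi$ for $B(0,2R)\setminus B(0,R)$, tested against $|A|^{p-2}$ and weighted so that the quartic term is reached only through $H^2|A|^{p+2}=|HA|^2|A|^{p-2}\le K^2|A|^{p-2}$, the errors from $\Na\phi$, from the drift $\Na f$, and from $\Na(H^2)$ being controlled by $|\Na\phi|\lesssim 1/R$, $|\Na f|\le R/2$, and $|\Na(H^2)|\le KR$. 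The target is a self-improving inequality $\int_{B(0,2R)\cap\Sigma}|A|^p\le\Lambda\int_{B(0,R)\cap\Sigma}|A|^p+C(1+R)^b$ with $\Lambda=\Lambda(n,p,K)$ and $b=b(n,p)$; iterating over dyadic scales $r_0,2r_0,4r_0,\dots$ gives $\int_{B(0,R)\cap\Sigma}|A|^p\le C(1+R)^{\max\{b,\log_2\Lambda\}}$, and combined with the Moser step this is the desired polynomial pointwise bound, with $a\approx\log_2\Lambda$.

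The main obstacle is the interplay between the $-2|A|^4$ term in the Simons identity and the fact that $\sup|HA|\le K$ conveys \emph{no pointwise control of $|A|$ by itself}: it pins $|A|$ down only where $|H|$ is bounded below, and on the radial set $\{|H|\approx 0\}$ both $|A|$ and $1/H^2$ can blow up at a comparable rate, so naive energy estimates degenerate to an empty ``$\infty\le\infty$''. The heart of the argument is to make the weighted test-function estimate actually close and self-improve: one must absorb every $|\Na A|^2$ term produced by integration by parts back into the good gradient term while keeping all errors polynomial in $R$, which forces a careful pairing of the $H^2$-weighted and unweighted forms of the estimate and a genuine use both of the slow variation $|\Na(H^2)|\le K|\x|$ and of the fact that $H^2$ is small exactly where $|A|$ is large. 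That the self-improvement constant $\Lambda$ is only some fixed number bigger than $1$, rather than $1+o(1)$, is precisely why one obtains polynomial rather than uniform growth. A secondary but essential point is that the propagation must be carried out with extrinsic-ball cutoffs against the unweighted measure, not with the Gaussian weight $e^{-f}$ — for which no scale-uniform Sobolev inequality is available, and which would convert the radius into an exponential rather than a polynomial factor.
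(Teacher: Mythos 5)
Your two-stage plan --- establish a polynomially weighted $L^p$ bound on $|A|$, then upgrade to a pointwise bound by Moser iteration --- is the same architecture the paper uses, and the Moser step as you describe it (treating $|A|^2$ as a subsolution of $-\Delta w\le\varphi w$ with $\varphi\sim |x|^2+|A|^2$, using $p>n$ to place $\varphi\in L^{q}$, $q>n/2$, and $|H|\lesssim K^{1/2}$ in the Michael--Simon constant) is essentially what the paper does, except the paper runs it on unit balls $B(x_0,1)\cap\Sigma$ for each $x_0$ rather than on a large ball $B(0,R)$, which avoids ever having to track an $R$-dependent Sobolev constant. The gap is entirely in the $L^p$ step. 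You propose a dyadic self-improvement $\int_{B(0,2R)\cap\Sigma}|A|^p\le\Lambda\int_{B(0,R)\cap\Sigma}|A|^p+C(1+R)^b$ with a fixed $\Lambda>1$, to come from ``a careful pairing of the $H^2$-weighted and unweighted forms of the estimate'' on the set $\{|A|>K\}\subset\{H^2<1\}$, but you do not derive it, and you yourself flag this as ``the heart of the argument.'' The obstacle is real: with a plain annular cutoff the drift $\nabla f$ has size $R$ on $B(0,2R)$, and after Young's inequality the cross terms $R|\nabla H||A|^{p-1}$, $R|\nabla A||A|^{p-3}$ produce $R^2\int|A|^{p-2}$-type errors that must re-enter $\int|A|^p$; nothing in your scheme supplies the compensating power of $R$ that would make $\Lambda$ scale-independent. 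The region split into $\{|A|\le K\}$ and $\{|A|>K\}$ is also hard to implement at the level of the Simons identity --- a level-set split does not commute with integration by parts.

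The paper closes the estimate in one shot, without dyadic iteration and without any level-set decomposition, by testing against $(f+1)^{-a}\phi$ with a single cutoff $\phi$ and a large exponent $a$. The algebraic identity
$$a|\nabla f|^2(f+1)^{-a-1}-\Delta f\,(f+1)^{-a}=\Big(a\tfrac{f-H^2}{f+1}+H^2-\tfrac n2\Big)(f+1)^{-a}$$
gives, for $|x|$ large (and using $H^2\le\sqrt n K$), a coefficient $\ge a-n$ on the main term $\int_\Sigma|A|^p(f+1)^{-a}\phi$. This is the source of positivity your unweighted annular estimate is missing: since $|\nabla f|^2\approx f$ and $\Delta f\approx -f$, the weight $(f+1)^{-a}$ turns the drift from a nuisance of size $R$ into a definite gain of size $a$, which then absorbs the $cK\int|A|^p(f+1)^{-a}\phi$ errors produced by (\ref{eqn4}), (\ref{eqn6}), (\ref{eqn8}) and Young's inequality, for $a=n+p+cK+1$. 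The hypothesis $|HA|\le K$ enters exactly as you guessed --- only through $HA^2\cdot A$, $H^2|A|^2$, and $H^2\le\sqrt n K$, never through any pointwise control of $|A|$ alone --- but globally and pointwise, not via a dichotomy in $|H|$. What remains, $\int_\Sigma(f+1)^{-a+p/2}$ and $\int_{B(0,r_0)\cap\Sigma}|A|^p$, is finite by Cheng--Zhou volume growth, and restricting the resulting global bound $\int_\Sigma|A|^p(f+1)^{-a}\le C$ to a unit ball around $x_0$ gives $\int_{B(x_0,1)\cap\Sigma}|A|^p\le C(|x_0|^2+1)^a$, which is the input your Moser step needs. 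If you want to repair your proposal, replace the annular cutoff and dyadic bootstrap by this single global weight; once you do, your own list of error terms closes without further ideas.
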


Based on the polynomial growth of volume and second fundamental form, we find that a self-shrinker with sufficiently small $|HA|$ must be a hyperplane.
\begin{theo}(Corollary \ref{cor:hyperplane gap})
Let $\x:\Sigma^n\to\mathbb{R}^{n+1}$ be a smooth properly embedded self-shrinker. There exists a constant $\varepsilon_n=\frac1{\sqrt{n}(n+5)^4}$ such that if $\sup_{\Sigma}|HA|\leq\varepsilon_n$ then $\Sigma$ is a hyperplane through 0.
\end{theo}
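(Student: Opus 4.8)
It is enough to show that the hypothesis forces $A\equiv 0$: a complete totally geodesic hypersurface of $\mathbb{R}^{n+1}$ is an affine hyperplane, on which $H\equiv 0$, so the self-shrinker equation $H=\langle\x,\n\rangle/2$ forces $\langle\x,\n\rangle\equiv 0$, i.e.\ the hyperplane contains $0$. Thus the content of the statement is to upgrade the scale-invariant bound $\sup_\Sigma|HA|\le\varepsilon_n$ to the \emph{pointwise} smallness $\sup_\Sigma|A|^2<\tfrac12$; a short integration by parts then finishes the argument.

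First I would record the analytic inputs. By Theorem \ref{theo:growth rate}, $|A|$ grows at most polynomially in $|\x|$, and by interior elliptic estimates so do $|\nabla A|$ and $|\nabla^2 A|$; since a properly immersed self-shrinker has polynomial volume growth, $\int_\Sigma(1+|\x|)^N e^{-f}<\infty$ for every $N$. Hence all the weighted integrals below converge, and, writing $\Delta_f:=\Delta_\Sigma-\tfrac12\langle\x,\nabla\,\cdot\,\rangle=e^{f}\,\div(e^{-f}\nabla\,\cdot\,)$, the identity $\int_\Sigma\Delta_f u\,e^{-f}=0$ and all integrations by parts against $e^{-f}$ are justified for the quantities we need. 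The engine is the Simons-type identity on a self-shrinker, which in this notation takes the form $\Delta_f|A|^2=2|\nabla A|^2+|A|^2-2|A|^4$ (in some conventions with an extra cubic term $cH\,\tr(A^3)$, which in any case obeys $|cH\,\tr(A^3)|\le|c|\,|HA|\,|A|^2\le|c|\varepsilon_n|A|^2$ and is harmless). Combined with the hypothesis this yields
\[
\Delta_f|A|^2\ \ge\ 2|\nabla A|^2+(1-|c|\varepsilon_n)|A|^2-2|A|^4.
\]
If one already knew $\sup_\Sigma|A|^2<\tfrac12$, then $(1-|c|\varepsilon_n)-2|A|^2\ge\lambda>0$ everywhere, so $\Delta_f|A|^2\ge\lambda|A|^2$; testing against $|A|^2e^{-f}$ and integrating by parts gives $-\int_\Sigma|\nabla|A|^2|^2e^{-f}\ge\lambda\int_\Sigma|A|^4e^{-f}$, hence $|A|\equiv 0$, as required.

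What remains — and is the heart of the proof — is to extract $\sup_\Sigma|A|^2<\tfrac12$ from $\sup_\Sigma|HA|\le\varepsilon_n$, and this is where the explicit value $\varepsilon_n=1/(\sqrt n(n+5)^4)$ is spent. Discarding the good terms, the displayed inequality gives the subsolution bound $\Delta_f|A|^2\ge-2\,|A|^2\cdot|A|^2$, on which I would run a Moser iteration for $w=|A|^2$, using the weighted Sobolev inequality available on properly immersed self-shrinkers, $\bigl(\int_\Sigma\phi^{\frac{2n}{n-2}}e^{-f}\bigr)^{\frac{n-2}{n}}\le C_S(n)\int_\Sigma(|\nabla\phi|^2+\phi^2)e^{-f}$. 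Each step of the iteration multiplies the integrability exponent by $n/(n-2)$ at a cost polynomial in that exponent and in $C_S(n)$, the resulting infinite product converges, and tracking it with all dimensional constants made explicit produces a pointwise bound $\sup_\Sigma|A|^2\le C_n\sup_\Sigma|HA|$ whose explicit constant $C_n$ is precisely what fixes $\varepsilon_n$ in the statement: $\sup_\Sigma|HA|\le\varepsilon_n$ then yields $\sup_\Sigma|A|^2<\tfrac12$, and the previous paragraph applies. (Alternatively, once one has $\sup_\Sigma|A|^2\le\tfrac12$ one may simply invoke the known rigidity of self-shrinkers with polynomial volume growth and $|A|^2\le\tfrac12$ — they are the hyperplanes and the generalized cylinders $S^k(\sqrt{2k})\times\mathbb{R}^{n-k}$ — and note that on every non-planar such shrinker $\sup|HA|=\sqrt k/2\ge\tfrac12>\varepsilon_n$, a contradiction; it is in this step that embeddedness is used.)

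I expect the Moser iteration of the previous paragraph to be the only genuine obstacle. The hypothesis controls $|HA|=|H|\,|A|$, so where $H$ is small it says nothing about $|A|$, and the quartic term $|A|^4$ in the Simons inequality is truly nonlinear; taming it is exactly what forces one to combine the global integrability from Theorem \ref{theo:growth rate} with a careful, constant-explicit iteration. Everything else — the Simons identity, the final integration by parts, and the identification of a totally geodesic self-shrinker with a hyperplane through the origin — is routine.
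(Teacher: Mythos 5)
The reduction in your first paragraph (if $A\equiv 0$ then $\Sigma$ is an affine hyperplane, and the self-shrinker equation then forces it through the origin) is correct and matches the paper's Lemma~\ref{lem:hyperplane}, and the Simons identity you quote agrees with (\ref{eqn6}). But the heart of your proposal — extracting the pointwise bound $\sup_\Sigma|A|^2\le C_n\sup_\Sigma|HA|<\tfrac12$ by Moser iteration on $-\Delta_f|A|^2\le 2|A|^4$ — has a genuine gap, and it is not one that a more careful bookkeeping of constants would fix. Running Moser iteration on $w=|A|^2$ with $-\Delta_f w\le\varphi w$, $\varphi=2w$, produces a bound of the form $\sup w\le C(n,C_S)\,\Phi\bigl(\|w\|_{L^{q_0}}\bigr)$ for some $q_0>n/2$: the output is controlled by an integral norm of $w$ itself, not by $\sup_\Sigma|HA|$. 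The hypothesis $\sup_\Sigma|HA|\le\varepsilon_n$ simply does not enter the iteration anywhere, and it does not bound the starting norm $\|\,|A|^2\|_{L^{q_0}}$ either: the constants in Theorem~\ref{theo:growth rate} depend on $\int_{B(0,r_0)\cap\Sigma}|A|^p$, which is not controlled by the corollary's hypotheses. You acknowledge that $|HA|$ says nothing about $|A|$ where $H$ is small, but the proposed iteration does not resolve this — it only recirculates integral information about $|A|$ that is not a priori small. (And if you \emph{assume} a gap-type conclusion to control these norms, the argument becomes circular.) The alternative you offer, invoking the known rigidity of shrinkers with $|A|^2\le\tfrac12$, is fine but hinges on the same unestablished pointwise bound.

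The paper avoids pointwise estimates entirely. Its Theorem~\ref{theo:gap} works directly at the level of weighted $L^p$ integrals: starting from the identity $\int_\Sigma(f-\tfrac n2)|A|^p e^{-f}=\int_\Sigma\nabla f\cdot\nabla|A|^p\,e^{-f}$, it uses the structure equation (\ref{eqn4}), $\nabla^2H-\nabla f\cdot\nabla A=\tfrac12A-HA^2$, to rewrite the drift term as $\nabla^2H$ plus terms that are manifestly $O(K)$. Integrating by parts again and absorbing $\int|\nabla H|^2|A|^p e^{-f}$ and $\int|\nabla A|^2|A|^{p-4}e^{-f}$ via (\ref{eqn8}) and (\ref{eqn6}) — each of which contributes only a factor of $K$ or $K^2$ — the cubic/quartic terms cancel against the nonnegative left-hand side $\tfrac12\int_\Sigma f|A|^pe^{-f}$, and one lands on
$\tfrac12\int_\Sigma f|A|^pe^{-f}\le\bigl(cK+\tfrac n2-\tfrac p2\bigr)\int_\Sigma|A|^pe^{-f}$
with $c=c(n,p)$ explicit. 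Choosing $p=n+4$ and $K\le\varepsilon_n$ makes the right side nonpositive, forcing $|A|\equiv0$. The Gaussian weight and the choice of $p$ replace any pointwise smallness: the argument never needs $\sup_\Sigma|A|$ to be finite, only the polynomial growth from Theorem~\ref{theo:growth rate} to justify the integrations by parts. If you want to salvage your route you would need a genuinely new ingredient to convert $\sup|HA|$-smallness into $L^{q_0}$-smallness of $|A|^2$; absent that, the paper's integral identity using (\ref{eqn4}) is doing real work that your iteration does not replicate.
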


By similar argument we find that a local energy bound implies a global energy bound.

\begin{theo}(Proposition \ref{prop:energy bound})
    Let $\x:\Sigma^n\to\mathbb{R}^{n+1}$ be a properly immersed self-shrinker with $n\geq4$ and $\sup_{\Sigma}|HA|\leq K$. Then there exists a $r_1=c_n\sqrt{K}$ such that if 
    $\int_{B(0,r_1)\cap\Sigma}|A|^n\leq E$,
    then for any $r>0$ we have
    $\int_{B(0,r)\cap\Sigma}|A|^n\leq 3Ee^{r^2/4}$.
\end{theo}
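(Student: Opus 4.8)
The plan is to reduce the conclusion — which already carries the factor $e^{r^2/4}$ — to a single $r$-independent Gaussian-weighted bound $\int_\Sigma|A|^n e^{-f}\le 3E$, and to prove the latter from the self-shrinker Simons identity together with the drift identity for $f=|\x|^2/4$. Throughout, $\mathcal L=\Delta-\tfrac12\langle\x,\nabla\,\cdot\,\rangle$ is the drift Laplacian on $\Sigma$, self-adjoint with respect to $d\mu:=e^{-f}\,dV$, and we use $\mathcal L|A|^2=2|\nabla A|^2+|A|^2-2|A|^4$, $\mathcal L f=\tfrac n2-f$, $|\nabla f|\le\sqrt f$, and the self-shrinker relation $A(\x^T)=-2\nabla H$.

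\emph{Step 1 (reduction).} Since $f\le r^2/4$ on $B(0,r)\cap\Sigma$,
\[
\int_{B(0,r)\cap\Sigma}|A|^n\,dV\ \le\ e^{r^2/4}\int_{B(0,r)\cap\Sigma}|A|^n\,d\mu\ \le\ e^{r^2/4}\int_\Sigma|A|^n\,d\mu ,
\]
and $\int_{B(0,r_1)\cap\Sigma}|A|^n\,d\mu\le\int_{B(0,r_1)\cap\Sigma}|A|^n\,dV\le E$. So it suffices to prove $\int_\Sigma|A|^n\,d\mu\le 3E$. One may also assume $K>\varepsilon_n$, since otherwise Corollary~\ref{cor:hyperplane gap} forces $\Sigma$ to be a hyperplane and $|A|\equiv 0$; in particular $1+K\le C_n K$ below.

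\emph{Step 2 (weighted Simons--Caccioppoli).} Fix a radial cutoff $\varphi$. Testing the Simons identity against $\varphi^2|A|^{n-4}$ and integrating by parts against $d\mu$, then reabsorbing the $\nabla A$-terms by Kato's inequality $|\nabla|A||\le|\nabla A|$ (this is where $n\ge 4$ is used, so that $|A|^{n-4}$ is an admissible weight), one controls $\int_\Sigma\varphi^2|A|^{n-4}|\nabla A|^2\,d\mu$ in terms of $\int_\Sigma\varphi^2|A|^n\,d\mu$ and cutoff errors; the quartic term enters here with its favourable (negative) sign. Combining this with the divergence identity
\[
\div\!\bigl(\varphi^2|A|^n e^{-f}\nabla f\bigr)=\langle\nabla(\varphi^2|A|^n),\nabla f\rangle\,e^{-f}+\varphi^2|A|^n\bigl(\tfrac n2-f\bigr)e^{-f}
\]
(integrated over $\Sigma$), and rewriting $\langle\nabla|A|^2,\x^T\rangle$ via $A(\x^T)=-2\nabla H$ — whose only term not of gradient type is the cubic $H\,\tr(A^3)$, bounded by $|HA|\,|A|^2\le K|A|^2$ — one is led to an inequality of the form
\[
\int_\Sigma\varphi^2 f\,|A|^n\,d\mu\ \le\ C(n)K\int_\Sigma\varphi^2|A|^n\,d\mu\ +\ C(n)\int_\Sigma\bigl(|\nabla\varphi|^2+\varphi|\nabla\varphi|\sqrt f\,\bigr)|A|^{n-2}\,d\mu .
\]

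\emph{Step 3 (absorption and limit).} Take $r_1:=c_n\sqrt K$ with $c_n$ large enough that $r_1^2/4\ge 2C(n)K$. Splitting $\Sigma$ along $\{f=r_1^2/4\}$, on $\{f\ge r_1^2/4\}$ the left-hand side dominates $2C(n)K\int\varphi^2|A|^n\,d\mu$ over that region, so the first term on the right is absorbed and there remains
\[
\int_\Sigma\varphi^2|A|^n\,d\mu\ \le\ 2\int_{B(0,r_1)\cap\Sigma}|A|^n\,dV\ +\ C(n)\int_\Sigma\bigl(|\nabla\varphi|^2+\varphi|\nabla\varphi|\sqrt f\,\bigr)|A|^{n-2}\,d\mu .
\]
Running this along an exhaustion $\varphi_j\uparrow 1$ whose gradients are supported in far annuli, splitting $|A|^{n-2}\le\delta|A|^n+C_\delta$ pointwise, and using the Euclidean volume growth of properly immersed self-shrinkers so that $\int_{\{f\ge R^2/16\}}(1+\sqrt f)e^{-f}\,dV\to 0$, the error terms are absorbed or vanish in the limit. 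Hence $\int_\Sigma|A|^n\,d\mu\le 2E$, and Step~1 gives $\int_{B(0,r)\cap\Sigma}|A|^n\le 2Ee^{r^2/4}\le 3Ee^{r^2/4}$.

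\emph{The main obstacle.} The quartic term $-2|A|^4$ of Simons' identity is not the enemy — its sign is the one needed for an \emph{upper} bound; the difficulty is that a naive Caccioppoli only delivers $\int\varphi^2|A|^n\,d\mu\le C(n)\int\varphi^2|A|^n\,d\mu+(\text{cutoff})$ with $C(n)\ge 1$ and therefore does not close. Inserting the factor $f$ through $\mathcal L f=\tfrac n2-f$ (a weighted form of the Colding--Minicozzi argument) is what breaks the circularity, but the resulting absorption holds only where $f$ is large; calibrating ``large'' against the cubic contribution $H\,\tr(A^3)\le K|A|^2$ is exactly what pins the base radius at the scale $\sqrt K$. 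Carrying all constants faithfully through the two integrations by parts and Kato's inequality, and discharging the a priori finiteness needed to pass to the limit, is the technically heavy part; the factor $3$ is the slack this argument leaves.
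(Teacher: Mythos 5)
Your Steps 1, 2, and the level--set splitting in Step 3 follow the same architecture as the paper's own argument: the core of the paper's proof of Proposition~\ref{prop:energy bound} is exactly the weighted drift inequality $\int_\Sigma f|A|^p e^{-f}\le 2(cK+\tfrac n2-\tfrac p2)\int_\Sigma |A|^p e^{-f}$ (inequality~(\ref{est12}) of Theorem~\ref{theo:gap} with $p=n$, so the parenthesis is $cK$), followed by splitting $\Sigma$ at $\{f=3b\}$ with $b\sim K$ so the $f$-factor eats the coefficient $cK$ on $\{f\ge 3b\}$; the factor $3$ and the scale $r_1\sim\sqrt K$ fall out the same way in your Step~3. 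So this is not a genuinely different route.

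The genuine gap is in the limit passage in your Step~3. You retain a cutoff $\varphi_j$ and assert the error terms ``are absorbed or vanish in the limit,'' but after splitting $|A|^{n-2}\le\delta|A|^n+C_\delta$, the $\delta$-part of the cutoff error is $\delta\int_{\mathrm{ann}_j}\bigl(|\nabla\varphi_j|^2+\varphi_j|\nabla\varphi_j|\sqrt f\bigr)|A|^n\,d\mu$. For a standard cutoff supported on $\{R_j\le|x|\le 2R_j\}$ the weight $\varphi_j|\nabla\varphi_j|\sqrt f$ is $O(1)$, not $o(1)$; and on the outer half of the annulus $\varphi_j$ tends to $0$, so this term is neither dominated by $\int\varphi_j^2 f|A|^n\,d\mu$ (absorption would need $\varphi_j\gtrsim 1$ there) nor automatically small. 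It tends to $0$ as $j\to\infty$ only if one already knows $\int_\Sigma|A|^n\,d\mu<\infty$ --- which is precisely the quantity being estimated. You flag this as ``discharging the a priori finiteness'' but do not close it, and the hypothesis $|HA|\le K$ alone gives no pointwise control of $|A|$ (an $|A|$ growing super-polynomially while $H$ decays is not ruled out by $|HA|\le K$), so the finiteness is not free. The paper closes exactly this hole by proving Theorem~\ref{theo:growth rate} first --- the $L^p$ estimate of Proposition~\ref{prop:Lp est} plus Moser iteration give polynomial growth of $|A|$ --- and together with the polynomial volume growth of Lemma~\ref{lem:vol est} this makes all the global Gaussian-weighted integrals in~(\ref{est1})--(\ref{est12}) absolutely convergent and justifies integrating by parts over all of $\Sigma$ with no cutoff at all (this is what the remark at the start of Section~4 is for). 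Your argument would be complete once you import that a priori bound; as written, the absorption in Step~3 does not close.
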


By virtue of the energy estimate above and the $\epsilon-$regularity from Li-Wang \cite{[LW]} we find that the space of properly embedded self-shrinkers with uniformly bounded entropy, uniformly bounded $|HA|$ and uniformly bounded local energy is compact. 

\begin{theo}(Theorem \ref{theo:compactness})
    Let $\{\Sigma^n_i\}$ be a sequence of properly embedded self-shrinkers with $n\geq4$ normalized by $\int_{\Sigma_i}e^{-f}\leq (4\pi)^{n/2}$. Assume that $\sup_{i}\sup_{\Sigma_i}|HA|\leq K$ and $\sup_{i}\int_{B(0,r_1)\cap\Sigma_i}|A|^n<\infty$ where $r_1=c_n\sqrt{K}$ is the positive constant in Proposition \ref{prop:energy bound}. Then a subsequence of $\{\Sigma_i\}$ converges smoothly to a smooth properly embedded self-shrinker $\Sigma_{\infty}$.
\end{theo}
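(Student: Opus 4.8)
The plan is to combine the global energy bound from Proposition~\ref{prop:energy bound} with the $\epsilon$-regularity theorem of Li--Wang \cite{[LW]} to obtain uniform local curvature estimates away from a small set, and then pass to a limit using standard arguments for sequences of self-shrinkers with bounded entropy. First I would observe that the normalization $\int_{\Sigma_i}e^{-f}\leq(4\pi)^{n/2}$ says the entropy $\lambda(\Sigma_i)$ is bounded by $1$ (equivalently by the Gaussian density of a hyperplane), so by Huisken's monotonicity and standard volume bounds for self-shrinkers, the $\Sigma_i$ have uniformly bounded area on every fixed ball $B(0,R)$; in particular, after passing to a subsequence, $\Sigma_i$ converges in the sense of Radon measures (as integral varifolds, in fact as Brakke-type limits of self-shrinkers) to some limiting stationary-for-the-Gaussian-weight integral varifold $V_\infty$ on $\RR^{n+1}$.

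Next I would upgrade this to smooth convergence using $\epsilon$-regularity. The point of Proposition~\ref{prop:energy bound} is that the hypothesis $\int_{B(0,r_1)\cap\Sigma_i}|A|^n\leq E$ (uniform in $i$) forces $\int_{B(0,R)\cap\Sigma_i}|A|^n\leq 3Ee^{R^2/4}$ for every $R$, uniformly in $i$. Thus the scale-invariant energy $\int_{B\cap\Sigma_i}|A|^n$ is uniformly small on sufficiently small balls centered anywhere in a fixed $B(0,R)$, except on a set of controlled measure. Concretely, fix $R$; by the uniform $L^n$ bound on $A$ together with the uniform area bound, for every $\delta>0$ there is $\rho_0=\rho_0(\delta,R,E,n)$ and a bounded number (independent of $i$) of "bad" balls $B(y_j,\rho_0)$ such that on $B(0,R)\setminus\bigcup_j B(y_j,\rho_0)$ every ball of radius $\rho_0$ meets $\Sigma_i$ with $\int|A|^n<\epsilon_0$, where $\epsilon_0$ is the threshold in the Li--Wang $\epsilon$-regularity theorem. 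That theorem then gives uniform $C^{2}$ (indeed $C^\infty$, via the self-shrinker equation and elliptic bootstrapping, since the $\epsilon$-regularity conclusion controls $|A|$ pointwise and the shrinker equation $H=\langle x,\n\rangle/2$ is then a uniformly elliptic system for the local graphs) bounds on $\Sigma_i$ over these good regions. By Arzel\`a--Ascoli and a diagonal argument over $R\to\infty$ and $\delta\to 0$, a subsequence of $\Sigma_i$ converges in $C^\infty_{loc}$, away from a closed set $\mathcal{S}$ of finite $(n-2)$-dimensional measure (in fact of locally finite such measure, by the usual estimate on the number of bad balls), to a smooth self-shrinker $\Sigma_\infty$; the limit satisfies the self-shrinker equation because the equation passes to $C^1_{loc}$-limits and regularity is then automatic.

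Finally I would remove the singular set $\mathcal{S}$ and confirm properness. Since $\Sigma_\infty$ away from $\mathcal{S}$ has locally bounded $|A|$ (by the $\epsilon$-regularity estimate, the bound is uniform up to $\mathcal{S}$ on the good regions) and $\mathcal{S}$ has vanishing $(n-1)$-dimensional Hausdorff measure, $\mathcal{S}$ is removable: the varifold limit $V_\infty$ has no mass concentration on $\mathcal{S}$ (again by the uniform area bound and a covering argument), so $V_\infty = \Sigma_\infty$ as varifolds across $\mathcal{S}$, and a smooth hypersurface with a closed subset of codimension $\geq 2$ that extends continuously the normal and second fundamental form is smooth across that subset by a standard removable-singularity argument (the local graphs extend as weak, hence classical, solutions of the shrinker equation). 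Properness and embeddedness of $\Sigma_\infty$ follow from: embeddedness passes to smooth limits with multiplicity, and multiplicity $>1$ is ruled out by the entropy normalization combined with the local smooth convergence (a multiplicity-$k$ sheet would force entropy $\geq k$ times that of the limit sheet, contradicting $\lambda\leq 1$ unless $k=1$); properness then follows since a self-shrinker that is a smooth limit of proper self-shrinkers with the Gaussian-weighted varifold limit having the right density is itself proper (this is where one uses that entropy-bounded self-shrinkers are automatically proper, by Cheng--Zhou). The main obstacle I anticipate is controlling the bad set uniformly in $i$ and showing it does not accumulate mass in the limit --- i.e., ruling out curvature concentration that is invisible to the $L^n$ bound --- which is exactly why Proposition~\ref{prop:energy bound} (turning a single local energy bound into a global one with explicit constants) is the crucial input; without the uniform global $L^n$ estimate one could not bound the number of bad balls independently of $i$.
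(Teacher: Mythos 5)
Your overall architecture matches the paper's: use Proposition~\ref{prop:energy bound} to convert the local $L^n$ bound into a uniform one on every $B(0,\rho)$, invoke the Li--Wang $\epsilon$-regularity away from a bad set of curvature concentration, then pass to a smooth limit by a diagonal argument and handle multiplicity. The main place your route genuinely diverges from the paper's is the multiplicity-one step, and there are two smaller imprecisions worth flagging.

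First, the bad set: since $\int|A|^n$ is scale-invariant, each concentration point of the limit measures $\nu_i(U)=\int_{U\cap\Sigma_i}|A|^n$ carries mass at least $\epsilon(n)$, so $S_\rho\subset B(0,\rho)$ is a \emph{finite set of points} with $\sharp S_\rho\leq 3E\epsilon^{-1}e^{\rho^2/4}$. Your description ``finite $(n-2)$-dimensional measure'' is both weaker than what is true and seems to import the $L^2$-in-dimension-2 scaling from minimal surface theory; the removability argument still goes through (codimension $\geq 2$ suffices, and the paper cites Proposition~7.14 of \cite{[CM]} for exactly this step), but the finiteness is what the paper records and is free here.

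Second, and more substantively, the multiplicity argument. The paper follows Colding--Minicozzi \cite{[CM1]}: it forms the normalized height-difference $u_i$ between top and bottom sheets, shows it satisfies $Lu_i=0$ up to lower order, uses a foliation and a local maximum principle near each singular point to extend the limit $u$ across $S$, concludes that a higher-multiplicity limit would be $L$-stable, and contradicts the nonexistence of $L$-stable smooth properly embedded self-shrinkers (Theorem~0.5 of \cite{[CM1]}). You instead propose a Gaussian-density counting argument: a multiplicity-$k$ varifold limit would force $(4\pi)^{-n/2}\int_{\Sigma_\infty}e^{-f}\leq 1/k$, while any properly embedded self-shrinker has $(4\pi)^{-n/2}\int_\Sigma e^{-f}\geq 1$ by Huisken monotonicity and the Gaussian density at a smooth point. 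This is a correct and shorter argument \emph{given the specific normalization $\int_{\Sigma_i}e^{-f}\leq(4\pi)^{n/2}$ in the statement}, but you should make explicit where the lower bound $\geq 1$ comes from (it is not in the paper's lemma toolkit) and note that it hinges on the tight bound $F\leq 1$; the paper's $L$-stability route works under an arbitrary entropy upper bound $F\leq D$ and is the argument that would survive if the normalization were relaxed. You should also note that, with the bound $F\leq 1$ as written, the same density inequality already forces $F(\Sigma_i)=1$ for every $i$ and hence $\Sigma_i$ is a hyperplane, so the theorem with this exact normalization is essentially degenerate; presumably the intended hypothesis is a fixed entropy bound, under which your shortcut no longer applies and the paper's $L$-stability argument is needed.

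Finally, you gesture at removability, properness, and embeddedness of $\Sigma_\infty$, but the paper's treatment is more economical: properness and polynomial volume growth come from Lemma~\ref{lem:vol est} (Cheng--Zhou), the multiplicity is finite by Lemma~\ref{lem:volume ratio lower bound} together with the entropy bound, and the extension of $\Sigma_\infty$ across $S$ follows the minimal surface argument of \cite{[CM]} after noting $\Sigma_\infty$ is minimal for the Gaussian conformal metric. Your proposal would benefit from invoking these specific lemmas rather than general varifold compactness, since the paper deliberately avoids geometric measure theory machinery where the integral estimates already do the work.
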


The organization of this paper is as follows. In Sect.2 we recall some results on self-shrinkers and differential equations. In Sect.3 we develop $L^p$ estimate of $A$ and derive pointwise estimate by standard Moser iteration. In Sect.4 we obtain the gap theorem using weighted integral estimate and get the convergence result by $\epsilon-$regularity.

{\bf Acknowledgements}: The author would like to thank his advisor H. Z. Li for suggesting this problem. Z. Wang is very grateful to I. Khan and H. B. Fang for their insightful discussions.

\section{Preliminaries}
Let $\x:\Sigma^n\to\mathbb{R}^{n+1}$ be a hypersurface without boundary. $\Sigma$ is called a self-shrinker if it satisfies
\beqs
H=\frac{\langle\x,\n\rangle}2.
\eeqs
Throughout this paper, we set the potential function
$$f:=\frac{|x|^2}4.$$
Some related equations are listed below for later calculations. See the proof of Lemma 3.20 and Theorem 5.2 of \cite{[CM2]} for details.
\begin{lem}\label{lem:eqns}
    On a self-shrinker we have
\beqn
&&\Delta f-|\nabla f|^2=\frac{n}2-f,\label{eqn0}\\
&&HA+\nabla^2{f}=\frac12g,\label{eqn1}\\
&&H^2+\Delta{f}=\frac n2,\label{eqn2}\\
&&|\nabla{f}|^2+H^2=f,\label{eqn3}\\
&&\nabla^2H-\nabla f\cdot\nabla A=\frac12A-HA^2,\label{eqn4}\\
&&\Delta A-\nabla f\cdot\nabla A=(\frac12-|A|^2)A,\label{eqn5}\\
&&\frac12 \Big(\Delta|A|^2-\nabla f\cdot\nabla|A|^2\Big)=|\nabla A|^2+(\frac12-|A|^2)|A|^2,\label{eqn6}\\
&&\Delta H-\nabla f\cdot\nabla H=(\frac12-|A|^2)H,\label{eqn7}\\
&&\frac12\Big(\Delta H^2-\nabla f\cdot\nabla H^2\Big)=|\nabla H|^2+(\frac12-|A|^2)H^2\label{eqn8}.
\eeqn
\end{lem}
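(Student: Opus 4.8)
The plan is to derive the nine identities in the order they are listed, starting from the Gauss and Weingarten formulas for the immersion $\x$ together with the defining relation $\langle\x,\n\rangle=2H$. First, differentiating $f=|\x|^2/4$ along $\Sigma$ gives $\nabla f=\frac12\x^{\top}$, the tangential part of $\x/2$; hence $|\nabla f|^2=\frac14|\x^{\top}|^2=\frac14\big(|\x|^2-\langle\x,\n\rangle^2\big)=f-H^2$, which is \eqref{eqn3}. Differentiating once more, applying the Gauss formula, and substituting $\langle\x,\n\rangle=2H$ gives $\nabla^2 f=\frac12 g-HA$, which is \eqref{eqn1}; taking its trace and using $\tr A=H$ yields $\Delta f=\frac n2-H^2$, i.e.\ \eqref{eqn2}, and subtracting \eqref{eqn3} from \eqref{eqn2} gives \eqref{eqn0}.

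Next I would differentiate the self-shrinker equation $2H=\langle\x,\n\rangle$ and use Weingarten to obtain the first-order relation $\nabla_iH=A_{ik}\nabla_kf$; differentiating again and feeding in \eqref{eqn1} for $\nabla^2 f$ produces $\nabla^2 H-\nabla f\cdot\nabla A=\frac12 A-HA^2$, which is \eqref{eqn4} (the Codazzi equation, i.e.\ that $\nabla_kA_{ij}$ is totally symmetric, is what makes the shorthand $\nabla f\cdot\nabla A$ unambiguous). Tracing \eqref{eqn4} and using $\tr A=H$ gives \eqref{eqn7}. For \eqref{eqn5} I would invoke Simons' identity for a hypersurface of $\RR^{n+1}$, which --- since the ambient space is flat, so that all curvature terms can be expressed through $A$ by Gauss --- reads $\Delta A=\nabla^2 H+HA^2-|A|^2A$; substituting \eqref{eqn4} makes the $HA^2$ terms cancel and leaves $\Delta A-\nabla f\cdot\nabla A=(\frac12-|A|^2)A$.

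Finally, \eqref{eqn6} and \eqref{eqn8} are Bochner-type corollaries of \eqref{eqn5} and \eqref{eqn7}: writing $\cL u:=\Delta u-\nabla f\cdot\nabla u$ for the drift Laplacian, one has the Leibniz identity $\frac12\cL|u|^2=|\nabla u|^2+\langle u,\cL u\rangle$ (valid for functions and for tensors), so applying it to $u=A$ with $\cL A=(\frac12-|A|^2)A$ gives \eqref{eqn6}, and to $u=H$ with $\cL H=(\frac12-|A|^2)H$ gives \eqref{eqn8}. \emph{The one genuinely computational step is \eqref{eqn5}} --- setting up Simons' identity with consistent sign conventions and commuting covariant derivatives correctly; everything else is a direct differentiation, a trace, or the Leibniz rule for $\cL$. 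These are precisely the computations behind Lemma~3.20 and Theorem~5.2 of \cite{[CM2]}, to which the statement already refers.
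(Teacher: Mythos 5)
Your derivation is correct and follows the same route as the reference (Colding--Minicozzi, Lemma~3.20 and Theorem~5.2 of \cite{[CM2]}) that the paper defers to: eqn3, eqn1, eqn2, eqn0 by direct differentiation of $f$ together with the self-shrinker relation; eqn4 and eqn7 by differentiating $2H=\langle\x,\n\rangle$ and tracing; eqn5 from Simons' identity together with eqn4; and eqn6, eqn8 from the drift-Laplacian Leibniz rule applied to eqn5 and eqn7. Since the paper provides no proof of its own but simply cites \cite{[CM2]}, your argument is exactly the intended one.
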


By Corollary 2.8 of \cite{[CM2]}, we know
\begin{lem}\label{lem:hyperplane}(Corollary 2.8 of \cite{[CM2]})
If $\Sigma$ is a self-shrinker and $H\equiv0$, then $\Sigma$ is a minimal cone. In particular, if $\Sigma$ is also smooth and embedded, then it is a hyperplane through 0.
\end{lem}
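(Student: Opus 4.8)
The plan is to first convert the hypothesis $H\equiv 0$ into a statement about the position vector, and then exploit dilation invariance. Since $\Sigma$ is a self-shrinker, $H=\langle\x,\n\rangle/2$, so $H\equiv 0$ forces $\langle\x,\n\rangle\equiv 0$; equivalently, the position vector $\x$ is everywhere tangent to $\Sigma$. Hence the Euler (dilation) vector field $V(x)=x$ on $\mathbb R^{n+1}$ restricts to a tangent vector field along $\Sigma$. Its flow on $\mathbb R^{n+1}$ is $\Phi_s(x)=e^s x$, and since $V$ is tangent to $\Sigma$ (and $\Sigma$ is complete), this flow preserves $\Sigma$, i.e. $\lambda\Sigma=\Sigma$ for every $\lambda>0$. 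Thus $\Sigma$ is a cone with vertex at the origin, and since $H\equiv 0$ it is minimal; this is the first assertion. As a cross-check, one can also read this off \eqref{eqn1}: with $H\equiv 0$ it reads $\nabla^2 f=\tfrac12 g$, so along any unit-speed geodesic $\gamma$ in $\Sigma$ one has $(f\circ\gamma)''\equiv\tfrac12$, which already forces $\Sigma$ to be flat with $f$ a quadratic polynomial of arclength, consistent with the cone structure.

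For the ``in particular'' part, assume in addition that $\Sigma$ is smooth and embedded (and, as always here, complete and without boundary). First I would check that the vertex $0$ belongs to $\Sigma$: since $\Sigma$ is a nonempty cone, $0\in\overline\Sigma$, and if $0\notin\Sigma$ then a sequence $\lambda_k x\to 0$ with $\lambda_k\to 0^+$ and $x\in\Sigma\setminus\{0\}$ would be Cauchy in $\Sigma$ without a limit in $\Sigma$, contradicting completeness; hence $0\in\Sigma$. At the point $0$, smoothness of the embedded hypersurface $\Sigma$ gives a well-defined tangent hyperplane $P:=T_0\Sigma$, and near $0$ the set $\Sigma$ is a graph over $P$ whose linear part vanishes; equivalently, the rescalings $\lambda\Sigma$ converge as $\lambda\to 0^+$ to $P$ in the local $C^1$ (or Hausdorff) sense. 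But $\lambda\Sigma=\Sigma$ for all $\lambda>0$, so this limit is $\Sigma$ itself; therefore $\Sigma=P$ is a hyperplane through $0$.

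I expect the only delicate point to be the behaviour at the vertex — justifying that the blow-up of an embedded hypersurface at a smooth point is its tangent plane, and that $0\in\Sigma$. A self-contained alternative that sidesteps the blow-up language is to use the scaling of the second fundamental form on a cone: $\lambda\Sigma=\Sigma$ gives $|A|(\lambda x)=\lambda^{-1}|A|(x)$ for all $x\in\Sigma\setminus\{0\}$ and $\lambda>0$, while smoothness of $\Sigma$ in a full neighbourhood of the vertex bounds $|A|$ near $0$; letting $\lambda\to 0^+$ forces $|A|\equiv 0$ on $\Sigma\setminus\{0\}$, hence on all of $\Sigma$ by continuity, so $\Sigma$ is totally geodesic and therefore an affine hyperplane, which, being a cone, passes through $0$. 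Either route completes the proof.
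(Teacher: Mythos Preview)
The paper does not give its own proof of this lemma; it is quoted directly as Corollary~2.8 of \cite{[CM2]} with no argument supplied. Your main line of reasoning --- $H\equiv 0$ forces $\langle \x,\n\rangle\equiv 0$, hence the position vector is tangent, hence dilations preserve $\Sigma$ and $\Sigma$ is a minimal cone; then for a smooth embedded cone the scaling law $|A|(\lambda x)=\lambda^{-1}|A|(x)$ together with boundedness of $|A|$ at the vertex forces $|A|\equiv 0$ --- is correct and is precisely the standard argument (the one Colding--Minicozzi give). Both your blow-up route and your $|A|$-scaling alternative are valid ways to conclude.

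One small correction: your ``cross-check'' via \eqref{eqn1} overreaches. The identity $\nabla^2 f=\tfrac12 g$, equivalently $(f\circ\gamma)''\equiv\tfrac12$ along every unit-speed geodesic, holds on \emph{any} Riemannian cone with $f=r^2/4$, not just on flat space --- a direct computation on $dr^2+r^2 g_N$ shows $(\nabla^2 f)_{ij}=\tfrac12 g_{ij}$ regardless of the link $N$. So that observation by itself does not force flatness; at best it is consistent with the cone structure you have already established. (With completeness one can invoke a Tashiro-type rigidity result to reach $\mathbb R^n$, but that is a theorem, not a one-line consequence of the geodesic ODE.) Since this is only a side remark and your principal argument is independent of it, the proof stands.
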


From \cite{[CZ]} one sees the equivalence of weighted volume finiteness, polynomial volume growth and properness of an immersed self-shrinker in Euclidean space.

\begin{lem}\label{lem:vol est}(Theorem 1.1 of \cite{[CZ]})
Let $\Sigma^n$ be a complete noncompact properly immersed self-shrinker in Eucildean space $\mathbb{R}^{n+1}$. Then $\Sigma$ has finite weighted volume 
$$\mbox{Vol}_{f}(\Sigma)=\int_{\Sigma}e^{-f}dv<+\infty$$
and
$$\mbox{Vol}(B(0,r)\cap\Sigma)\leq Cr^n,\quad\forall\,r>0,$$
where $C$ is a positive constant depending only on $\int_{\Sigma}e^{-f}dv$.
\end{lem}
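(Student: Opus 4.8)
The plan is to run everything through the drift Laplacian $\cL u:=\Delta u-\lan\nabla f,\nabla u\ran$, which satisfies $\div\!\big(e^{-f}\nabla u\big)=e^{-f}\cL u$, so that $\int_{D}(\cL u)\,e^{-f}=\int_{\partial D}e^{-f}\,\partial_\nu u$ on any compact $D\subset\Sigma$ with smooth boundary. The only input from the self-shrinker equation needed is $\cL(|x|^2)=2n-|x|^2$, which is four times \eqref{eqn0}. Properness makes the extrinsic ball $D(r):=B(0,r)\cap\Sigma$ compact with smooth boundary $\{|x|=r\}\cap\Sigma$ and outward conormal $\nu=x^{T}/|x^{T}|$ for a.e.\ $r$, where $x^{T}$ is the part of the position vector tangent to $\Sigma$.

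For the finite weighted volume, I would integrate $e^{-f}\cL(|x|^2)=(2n-|x|^2)e^{-f}$ over $D(r)$: the boundary term is $\int_{\partial D(r)}e^{-f}\,\partial_\nu|x|^2=2e^{-r^2/4}\int_{\partial D(r)}|x^{T}|\geq0$, hence $\int_{D(r)}|x|^2e^{-f}\leq 2n\int_{D(r)}e^{-f}$. Bounding the left side below by $4n\,\vol_f\!\big(D(r)\cap\{|x|^2>4n\}\big)$ and setting $m:=\vol_f\!\big(\Sigma\cap\overline{B(0,2\sqrt n)}\big)<\infty$ gives $4n\big(\vol_f(D(r))-m\big)\leq 2n\,\vol_f(D(r))$, so $\vol_f(D(r))\leq 2m$ for every $r$; letting $r\to\infty$ yields $\vol_f(\Sigma)\leq 2m<\infty$. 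Along the way one also gets $\int_\Sigma|x|^2e^{-f}\leq 2n\,\vol_f(\Sigma)$ and, using \eqref{eqn3} (whence $H^2\leq f$), $\int_\Sigma H^2e^{-f}\leq\tfrac n2\,\vol_f(\Sigma)$.

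For the polynomial growth I would iterate the same mechanism: computing $\cL(|x|^{2k})$, the curvature contribution is a \emph{negative} multiple of $H^2|x|^{2k-4}$, so $\int_{D(r)}\cL(|x|^{2k})e^{-f}\geq0$ forces $\int_\Sigma|x|^{2k}e^{-f}\leq 2(2k+n-2)\int_\Sigma|x|^{2k-2}e^{-f}$, hence $\int_\Sigma|x|^{2k}e^{-f}\leq C(n,k)\,\vol_f(\Sigma)$ for all $k$. These Gaussian-type moment bounds make $\vol_f(\{f\geq T\})$ decay faster than any power of $T^{-1}$, and a shell-by-shell comparison with unweighted volume on $\{T\leq f\leq T+1\}$ (optimizing over $k$ at each scale) turns the trivial estimate $\vol(B(0,r)\cap\Sigma)\leq e^{r^2/4}\vol_f(\Sigma)$ into a polynomial bound — but of non-optimal degree (roughly $n+1$ by this route). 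Obtaining the \emph{sharp} exponent $n$ with constant depending only on $\vol_f(\Sigma)$ — which is the content of Theorem~1.1 of \cite{[CZ]}, invoked here rather than reproved — requires instead the self-shrinker monotonicity formula, which from $\Delta_\Sigma|x|^2=2n-4H^2$ and the coarea formula reads
\[
\frac{\vol(D(\sigma))}{\sigma^n}-\frac{\vol(D(\rho))}{\rho^n}=\int_{D(\sigma)\setminus D(\rho)}\frac{4H^2}{|x|^{n+2}}\,dv\;-\;2\int_\rho^\sigma t^{-n-1}\Big(\int_{D(t)}H^2\,dv\Big)dt ,
\]
together with a careful scale-by-scale absorption of the curvature term $\int_{D(t)}H^2$, controlled through the weighted $L^2$-bound on $H$ above. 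That last absorption is the main obstacle: the drift-Laplacian estimates handle finiteness of the weighted volume and of all weighted moments essentially for free, whereas upgrading the resulting exponential volume bound to the optimal $r^n$ genuinely needs the monotonicity formula and a delicate dyadic argument.
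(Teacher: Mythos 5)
The paper does not give its own proof of this lemma — it is quoted directly from Theorem~1.1 of Cheng--Zhou \cite{[CZ]} — so there is no in-paper argument to compare against; I will judge your proposal on its own terms. Your drift-Laplacian proof of \emph{finite weighted volume} is correct and complete: $\div(e^{-f}\nabla u)=e^{-f}\Delta_f u$, the identity $\Delta_f|x|^2=2n-|x|^2$ from \eqref{eqn0}, nonnegativity of the flux $\partial_\nu|x|^2=2|x^T|\geq 0$ on $\partial(B(0,r)\cap\Sigma)$, and the split at $|x|^2=4n$ give $\vol_f(D(r))\leq 2\vol_f\big(\Sigma\cap\overline{B(0,2\sqrt n)}\big)$ uniformly in $r$, hence $\vol_f(\Sigma)<\infty$. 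The iterated moment bounds $\int_\Sigma|x|^{2k}e^{-f}\lesssim\Gamma(k+\tfrac n2)\,\vol_f(\Sigma)$ are also right, and your accounting that optimizing over $k$ yields only $\vol(D(r))\lesssim r^{n+1}$ is accurate (each unit shell in $f$ contributes $\lesssim T^{(n-1)/2}$ and summing to $T\sim r^2/4$ gives $r^{n+1}$).

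The genuine gap is in the passage to the \emph{sharp} exponent $n$. Your displayed extrinsic monotonicity identity is in fact correct (substituting $|x^\perp|^2=4H^2$ and $\lan x,\n\ran H=2H^2$ into the general first-variation formula reproduces it), but the ``absorption'' you invoke does not close. Controlling $\int_{D(t)}H^2$ by the pointwise bound $H^2\leq t^2/4$ turns the identity into $\frac{d}{dr}\big(r^{-n}\vol(D(r))\big)\geq -\tfrac r2\,r^{-n}\vol(D(r))$, which is a \emph{lower} bound and gives nothing; the weighted $L^2$ bound on $H$ loses a factor $e^{t^2/4}$ and is useless here; and the positive term $\int\frac{4H^2}{|x|^{n+2}}$ has no a-priori upper bound without already knowing Euclidean growth, which would be circular. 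The route that actually delivers $r^n$ with constant depending only on $\vol_f(\Sigma)$ — and is essentially Cheng--Zhou's argument — is the rescaled Gaussian density $u(t):=(4\pi t)^{-n/2}\int_\Sigma e^{-|x|^2/(4t)}\,dv$. Testing $\div_\Sigma\big(e^{-|x|^2/(4t)}x^T\big)$ with $\div_\Sigma x^T=n-2H^2$ and $|x^T|^2=|x|^2-4H^2$ on the shrinker gives
\begin{equation*}
u'(t)\;=\;-(4\pi t)^{-n/2}\,\frac1t\Big(1-\frac1t\Big)\int_\Sigma e^{-|x|^2/(4t)}H^2\,dv,
\end{equation*}
so $u$ increases on $(0,1)$ and decreases on $(1,\infty)$, hence $u(r^2)\leq u(1)=(4\pi)^{-n/2}\vol_f(\Sigma)$ for all $r>0$. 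Since $e^{-|x|^2/(4r^2)}\geq e^{-1/4}$ on $D(r)$ this gives $\vol(D(r))\leq e^{1/4}\vol_f(\Sigma)\,r^n$. (The integration by parts is justified for $t\leq 1$ because $e^{-|x|^2/(4t)}\leq e^{-f}$ together with your moment bounds kills the flux; for $t>1$ first work on $D(R)$, where the boundary flux has the favorable sign, and let $R\to\infty$.) In short: your drift-Laplacian step supplies the crucial input $u(1)<\infty$, but stopping there leaves the exponent off by one, and the monotonicity identity you quote is not the one that closes the argument.
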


Provided bounded mean curvature we can also get volume ratio lower bound. See Lemma 3.5 in Li-Wang \cite{[LW]}.
\begin{lem}\label{lem:volume ratio lower bound}(Lemma 3.5 of \cite{[LW]})
    Let $\Sigma^n\hookrightarrow\mathbb{R}^{n+1}$ be a properly immersed hypersurface in $B(x_0,r_0)$ with $x_0\in\Sigma$ and $\sup_{\Sigma}|H|\leq \Lambda$. Then for any $s\in(0,r_0)$ we have 
    $$\frac{\mbox{Vol}_{\Sigma}(B(x_0,s)\cap\Sigma)}{\omega_n s^n}\leq e^{\Lambda r_0}\frac{\mbox{Vol}_{\Sigma}(B(x_0,r_0)\cap\Sigma)}{\omega_n {r_0}^n}.$$
    In particular,
    $$\mbox{Vol}(B(x_0,r)\cap\Sigma)\geq e^{-\Lambda r}\omega_n r^n,\quad\forall\,r\in(0,r_0].$$
    \end{lem}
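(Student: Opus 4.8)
The plan is to derive both statements from the monotonicity formula for the area ratio of a hypersurface with bounded mean curvature. Fix $x_0\in\Sigma$ and write $y=x-x_0$, $r=|y|$, $D_s=B(x_0,s)\cap\Sigma$ and $V(s)=\mathrm{Vol}_\Sigma(D_s)$; properness guarantees that $\overline{D_s}$ is compact and $V(s)<\infty$ for $s<r_0$. The starting point is the first-variation identity $\mathrm{div}_\Sigma(y^T)=n-H\langle y,\n\rangle$, where $y^T$ and $y^\perp$ denote the tangential and normal components of $y$ along $\Sigma$; since $\sup_\Sigma|H|\le\Lambda$ this gives $|\mathrm{div}_\Sigma(y^T)-n|\le\Lambda r$. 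I will also use that $\nabla_\Sigma\tfrac{r^2}{2}=y^T$, hence $\nabla_\Sigma r=y^T/r$ and $|\nabla_\Sigma r|=|y^T|/r$.

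First I would integrate this identity over $D_s$ for a regular value $s$ of $r|_\Sigma$ (a.e.\ $s$, by Sard). The divergence theorem on $\Sigma$ gives $\int_{D_s}\mathrm{div}_\Sigma(y^T)=\int_{\partial D_s}|y^T|$ (the outward conormal to $\partial D_s$ is $\nabla_\Sigma r/|\nabla_\Sigma r|=y^T/|y^T|$); the coarea formula gives $V'(s)=\int_{\partial D_s}|\nabla_\Sigma r|^{-1}=s\int_{\partial D_s}|y^T|^{-1}$; and the identity $|y^T|^2=s^2-|y^\perp|^2$ on $\partial D_s$ turns the boundary integral into $\int_{\partial D_s}|y^T|=sV'(s)-\int_{\partial D_s}\frac{|y^\perp|^2}{|y^T|}$. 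Since the subtracted term is nonnegative while $\int_{D_s}\mathrm{div}_\Sigma(y^T)\ge nV(s)-\Lambda\int_{D_s}|y|\ge(n-\Lambda s)V(s)$, combining these relations yields the differential inequality
$$
sV'(s)\ \ge\ (n-\Lambda s)\,V(s)\qquad\text{for a.e. }s\in(0,r_0).
$$
Since $V$ is positive and locally Lipschitz on $(0,r_0)$ (again by the coarea formula), $\log V$ is absolutely continuous there, so this inequality integrates to show that $s\mapsto\log\big(V(s)/(\omega_n s^n)\big)+\Lambda s$ is nondecreasing on $(0,r_0)$.

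Comparing this monotone quantity at $s$ and at $r_0$ and using $e^{-\Lambda s}\le 1$ gives $V(s)/(\omega_n s^n)\le e^{\Lambda r_0}\,V(r_0)/(\omega_n r_0^n)$, which is the first inequality. For the ``in particular'' part I would let $s\to 0^+$: the sheet of $\Sigma$ through $x_0$ is a $C^1$ graph there, so $\liminf_{s\to0^+}V(s)/(\omega_n s^n)\ge 1$ and hence the monotone quantity is $\ge 0$ in the limit; monotonicity then forces $\log\big(V(r)/(\omega_n r^n)\big)+\Lambda r\ge 0$, i.e.\ $\mathrm{Vol}(B(x_0,r)\cap\Sigma)\ge e^{-\Lambda r}\omega_n r^n$, for every $r\in(0,r_0]$.

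The points requiring care are technical rather than conceptual: legitimizing the divergence-theorem and coarea computations at a.e.\ radius (via Sard's theorem, or, to bypass regular values entirely, by testing the identity against a smooth radial cutoff $\phi(r)$ and letting $\phi\uparrow\mathbf 1_{[0,s]}$, as in the standard proof of the monotonicity formula), verifying the absolute continuity of $V$ so the a.e.\ inequality can be integrated, and justifying $\liminf_{s\to0^+}V(s)/(\omega_n s^n)\ge 1$ from the local graphical structure of $\Sigma$ at $x_0$. The only substantive input is the first-variation identity $\mathrm{div}_\Sigma(y^T)=n-H\langle y,\n\rangle$ together with the favorable sign of the discarded boundary term $\int_{\partial D_s}|y^\perp|^2/|y^T|$.
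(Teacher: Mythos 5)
Your proof is correct, and it is the standard monotonicity-formula argument for hypersurfaces with bounded mean curvature. Note that the paper itself offers no proof of this lemma --- it is recalled verbatim as Lemma 3.5 of Li--Wang \cite{[LW]} --- but the first-variation identity $\mathrm{div}_\Sigma(y^T)=n-H\langle y,\n\rangle$, the coarea/divergence bookkeeping leading to $sV'(s)\ge(n-\Lambda s)V(s)$, the resulting monotonicity of $\log\big(V(s)/(\omega_n s^n)\big)+\Lambda s$, and the density lower bound $\liminf_{s\to 0^+}V(s)/(\omega_n s^n)\ge 1$ at the interior point $x_0\in\Sigma$ are exactly the ingredients behind that cited lemma, so your argument matches the intended proof.
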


In order to obtain the growth rate of second fundamental form from the $L^p$ estimate we will apply the standard Moser iteration. Recall the Michael-Simon inequality which needs mean curvature. Here we present a precise estimate of elliptic case derived from \cite{[LS2]} and \cite{[Li]}. 
\begin{lem}[\textbf{Moser iteration}]\label{lem:Moser iteration}
    Let $\Sigma^n\hookrightarrow\mathbb{R}^{n+1}$ be a hypersurface without boundary.
    Consider the differential inequality 
    $$-\Delta u\leq \varphi u,\quad u\geq0.$$
    Fix $x_0\in\Sigma$ and denote $D_r:=B(x_0,r)\cap\Sigma$. Then for any $r>0$, $q>\frac{n}{2}$ and $\beta\geq2$ there exists a positive constant $C=C(n,q,\beta)$ such that 
    \beqs    
    \|u\|_{L^{\infty}(D_{r/2})}
    \leq Cr^{-\frac{2n^2}{\beta}}\Big(\|\varphi\|_{L^q(D_r)}^{\frac{2q}{2q-n}}+\|H\|_{L^{n+2}(D_r)}^{n+2}\Big)^{\frac{n^2}{\beta}}
    \|u\|_{L^{\beta}(D_r)}.
    \eeqs
\end{lem}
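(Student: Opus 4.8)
The plan is to run the standard De Giorgi–Nash–Moser iteration on the hypersurface $\Sigma$, using the Michael–Simon Sobolev inequality as the substitute for the Euclidean Sobolev inequality. First I would recall the Michael–Simon inequality in the form: for a compactly supported nonnegative Lipschitz function $v$ on $\Sigma^n$ (with $n\ge 2$) one has
\beqs
\Big(\int_{\Sigma}v^{\frac{n}{n-1}}\Big)^{\frac{n-1}{n}}\leq c(n)\int_{\Sigma}\big(|\nabla v|+|H|\,v\big),
\eeqs
so that for $v=u^{\gamma}\eta$ the mean-curvature term must be carried along through the whole argument; this is precisely the source of the $\|H\|_{L^{n+2}}^{n+2}$ contribution in the statement. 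After squaring and applying Hölder, this upgrades to an $L^2\to L^{\frac{n}{n-2}}$ Sobolev inequality with a constant depending on $n$ and on $\|H\|_{L^n(\mathrm{supp}\,v)}$; tracking this dependence carefully, and reducing $L^n$ of $H$ to $L^{n+2}$ by Hölder against the volume, is where the specific exponents in the lemma come from.

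Next I would do the Caccioppoli step. Multiply $-\Delta u\le \varphi u$ by $u^{2\gamma-1}\eta^2$ with $\gamma\ge 1$ and $\eta$ a cutoff supported in $D_r$, equal to $1$ on $D_{r/2}$, with $|\nabla\eta|\le C/r$, and integrate by parts on $\Sigma$ (no boundary term since $\Sigma$ is closed / $\eta$ is compactly supported). Absorbing cross terms via Cauchy–Schwarz gives
\beqs
\int_{\Sigma}|\nabla(u^{\gamma}\eta)|^2\leq C\gamma^2\int_{\Sigma}\Big(\varphi\,\eta^2 u^{2\gamma}+|\nabla\eta|^2 u^{2\gamma}\Big).
\eeqs
Feed this into the Sobolev inequality, use Hölder on the $\int \varphi\, \eta^2 u^{2\gamma}$ term with exponents $q$ and $q/(q-1)$ (here $q>n/2$ is exactly what makes $q/(q-1)<n/(n-2)$, so the resulting power of $u$ can be reabsorbed), and obtain the reverse-Hölder inequality
\beqs
\|u\|_{L^{2\gamma\chi}(D_{\rho'})}^{2\gamma}\leq C\gamma^2\Big(\|\varphi\|_{L^q(D_r)}^{\tfrac{2q}{2q-n}}+\|H\|_{L^{n+2}(D_r)}^{n+2}+(\rho-\rho')^{-2}\Big)\|u\|_{L^{2\gamma}(D_\rho)}^{2\gamma},
\eeqs
for $D_{\rho'}\subset D_\rho\subset D_r$, where $\chi=\tfrac{n}{n-2}>1$.

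Finally I would iterate: set $\gamma_k=\chi^k$ starting from $2\gamma_0=\beta$, take a geometric sequence of radii $\rho_k\downarrow r/2$ with $\rho_k-\rho_{k+1}\sim r\,2^{-k}$, and multiply the resulting inequalities. The series $\sum \chi^{-k}\log(C\gamma_k^2)$ and $\sum\chi^{-k}\log(r^{-2}4^k)$ converge, yielding
\beqs
\|u\|_{L^\infty(D_{r/2})}\leq C(n,q)\Big(r^{-2}+\|\varphi\|_{L^q(D_r)}^{\tfrac{2q}{2q-n}}+\|H\|_{L^{n+2}(D_r)}^{n+2}\Big)^{\tfrac{1}{2}\cdot\tfrac{1}{1-\chi^{-1}}\cdot\tfrac{1}{\beta/?}}\|u\|_{L^\beta(D_r)},
\eeqs
and a final rescaling $x\mapsto x/r$ (or bookkeeping of the $r$-powers through the iteration) produces the stated prefactor $r^{-2n^2/\beta}$ with the exponent $n^2/\beta$ on the bracket — the precise constants $\tfrac{1}{1-\chi^{-1}}=\tfrac n2$ account for these. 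The main obstacle, and the only genuinely non-routine point, is the careful tracking of the $H$-dependence: one must keep the Michael–Simon mean-curvature term through the Caccioppoli and Sobolev steps and verify it enters only as $\|H\|_{L^{n+2}(D_r)}^{n+2}$ with a universal coefficient, rather than producing an implicit constant depending on $\|H\|$; this is what makes the estimate usable later when $H$ is controlled only in an integral sense. The dependence of the Sobolev constant on the local volume of $\Sigma$ must also be shown to be harmless, e.g. by absorbing it into the $\|H\|_{L^{n+2}}$ term via the self-shrinker structure, or by noting it only improves the inequality.
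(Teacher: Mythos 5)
The paper itself gives no proof of this lemma; it is stated as a ``precise estimate \ldots derived from [LS2] and [Li],'' so there is no internal argument against which to compare line by line. That said, your plan is exactly the one those references use: Michael--Simon Sobolev inequality to replace the Euclidean Sobolev, a Caccioppoli estimate from multiplying $-\Delta u\le\varphi u$ by $u^{2\gamma-1}\eta^2$, interpolation using $q>n/2$ (which is precisely what makes $q/(q-1)<n/(n-2)$) to handle the $\varphi$ term, and a geometric iteration in $\gamma$ and radius. Your identification of the $H$-tracking as the only genuinely nonroutine point is also on target, and your computation of the exponent $\tfrac{2q}{2q-n}$ on $\|\varphi\|_{L^q}$ from the interpolation/Young step is correct.

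Where your sketch falls short of a proof is exactly where you flag uncertainty. First, the iteration bookkeeping is left with a placeholder ``$?$'' and does not actually produce the stated prefactor $r^{-2n^2/\beta}$ and bracket exponent $n^2/\beta$; a naive run with $\chi=n/(n-2)$ and $\sum_k (2\gamma_k)^{-1}=\tfrac{n}{2\beta}$ would give the smaller, sharper exponents $\tfrac{n}{2\beta}$ on the bracket and $r^{-n/\beta}$ in front, so one has to see either that the authors are intentionally stating a weaker-but-cleaner bound (which must then be checked to follow) or that the $H$-term degrades $\chi$. Second, the proposed ``final rescaling $x\mapsto x/r$'' is not innocuous on a hypersurface: under dilation $H$ and $|A|$ scale by $r^{-1}$ and volumes by $r^n$, so $\|H\|_{L^{n+2}(D_r)}^{n+2}$ picks up its own $r$-powers, and the $r$-dependence cannot simply be factored out at the end; it has to be carried through the iteration, as your alternative ``bookkeeping of the $r$-powers'' route suggests. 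Third, the argument as written needs $n>2$ (for $\chi=n/(n-2)$); the $n=2$ case requires a modified Sobolev exponent. None of these is a conceptual obstruction, but they are precisely the parts that convert your outline into the lemma as stated, and they remain to be done.
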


Finally we recall some technical results on interior estimates and compactness of immersed hypersurfaces in $\mathbb{R}^{n+1}$. Note that if $\Sigma$ is a self-shrinker then $\{\Sigma_t=\sqrt{-t}\Sigma,\,-\frac32\leq t\leq-\frac12\}$ is a mean curvature flow, i.e., $\partial_t\x=-H\n$. We obtain some kind of $\epsilon-$regularity from Corollary 3.11 and Theorem 3.7 in Li-Wang \cite{[LW]} and the interior estimates of Ecker and Huisken in \cite{[Ma]}.

\begin{lem}[\textbf{$\epsilon$-regularity}]\label{lem:epsilon-regularity}
There exist constants $\epsilon=\epsilon(n)>0$, $\delta=\delta(n)>0$, $\eta=\eta(n)>0$ and $\{D_k(n,\theta)\}_{k\geq1}$ satisfying the following properties.
Let $\Sigma^n\hookrightarrow\mathbb{R}^{n+1}$ be a properly immersed self-shrinker satisfying $\sup_{\Sigma}|H|\leq\Lambda$. 
If 
$$\int_{B(x_0,r)\cap\Sigma}|A|^n\leq \epsilon$$
for some $x_0\in\mathbb{R}^{n+1}$ and some $0<r\leq\frac1\Lambda$, then we have
$$\sup_{B(x_0,r/2)\cap\Sigma}|A|\leq \frac1r;$$

$$\sup_{B(x_0,r/32)\cap\Sigma_t}|A|\leq \frac2{\delta r},\quad\forall\,t+1\in[-\frac{\eta r^2}{16},\frac{\eta r^2}{16}]\cap[-\frac12,\frac12];$$

$$\sup_{B(x_0,\sqrt{\theta}R)\cap\Sigma}|\nabla^k A|\leq \frac{2D_k(n,\theta)}{\delta r},\quad\forall\,\theta\in(0,\frac12],\quad\forall\,k\geq1,$$
where $R:=\min\{\frac{r}{32},\frac{\sqrt{n\eta}}2r, \sqrt{2n}\}$.
\end{lem}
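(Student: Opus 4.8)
The plan is to handle the three conclusions separately and then glue them. For the first (static) estimate I would run a Choi--Schoen / De Giorgi--Nash--Moser $\epsilon$-regularity argument. By the drift equation \eqref{eqn5}, $\Delta A-\nabla f\cdot\nabla A=(\tfrac12-|A|^2)A$, so Kato's inequality gives that $u:=|A|$ is a weak subsolution of $-\Delta u\le |A|^2\,u-\nabla f\cdot\nabla u$. A standard point--picking step (maximising a cut-off multiple of $|A|$, as in Choi--Schoen) reduces matters to a properly immersed hypersurface with $|A|(0)=1$, $\sup|A|\le 2$ on a ball of definite radius, $\sup|H|\le 1$ --- this is where $r\le 1/\Lambda$ is used, so the $\|H\|_{L^{n+2}}$ term in Lemma~\ref{lem:Moser iteration} stays scale-invariantly bounded --- and $\int|A|^n\le\epsilon$, the last being preserved because $\int|A|^n$ is dimensionless. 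Since $\|\,|A|^2\,\|_{L^q}=\|\,|A|\,\|_{L^{2q}}^2$ is then controlled by $\int|A|^n$ for $q$ slightly above $n/2$, Lemma~\ref{lem:Moser iteration} (with the extrinsic volume bound used to estimate $\|H\|_{L^{n+2}}$) forces $|A|(0)\le C_n\epsilon^{1/n}<1$ once $\epsilon=\epsilon(n)$ is small --- a contradiction. This is essentially Corollary~3.11 of \cite{[LW]}.

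The subtle point in that step is that the drift $\nabla f\cdot\nabla u$, and more generally the $f$-dependent terms, have coefficients controlled by $\sup|x|$ on the relevant ball, hence not uniformly bounded in the basepoint $x_0$; a purely elliptic argument would then produce constants depending on $|x_0|$. The fix, which also drives the remaining two conclusions, is to pass to the mean curvature flow $\{\Sigma_t=\sqrt{-t}\,\Sigma\}$, which for $|t+1|\le\tfrac12$ is a smooth properly immersed flow governed by the purely local, drift-free equation $\partial_t\x=-H\n$, so that $(\partial_t-\Delta)|A|^2\le 2|A|^4$ carries no $f$ at all. For the second conclusion I would feed the static bound $\sup_{B(x_0,r/2)\cap\Sigma_{-1}}|A|\le 1/r$, together with the rescaling-invariant smallness $\int|A|^n\le\epsilon$, into the local regularity theory for mean curvature flow (Ecker's local estimates, equivalently Theorem~3.7 of \cite{[LW]}); this propagates the curvature bound over a parabolic neighbourhood of the slice $t=-1$ and yields $\delta(n),\eta(n)>0$ with $\sup_{B(x_0,r/32)\cap\Sigma_t}|A|\le\tfrac{2}{\delta r}$ for $t+1\in[-\tfrac{\eta r^2}{16},\tfrac{\eta r^2}{16}]\cap[-\tfrac12,\tfrac12]$. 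Here $r\le 1/\Lambda$ keeps the time window inside the range where $\Sigma_t$ is a legitimate rescaled flow with $|H|r$ controlled.

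For the third conclusion I would invoke the interior estimates of Ecker--Huisken \cite{[Ma]}: once $|A|\le 2/(\delta r)$ on the parabolic cylinder just produced, these give $\sup|\nabla^k A|\le 2D_k(n,\theta)/(\delta r)$ on a smaller parabolic cylinder for each $k\ge1$ and each $\theta\in(0,\tfrac12]$, and restricting to the slice $t=-1$ while tracking how the spatial radius must shrink when part of the time interval is traded for interior room gives the stated bound on $B(x_0,\sqrt{\theta}R)\cap\Sigma$; the three entries of $R=\min\{\tfrac{r}{32},\tfrac{\sqrt{n\eta}}{2}r,\sqrt{2n}\}$ record, respectively, the spatial size of the cylinder, the parabolic time-to-space conversion, and an absolute cutoff coming from the range of $t$ in use. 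The main obstacle throughout is the first step --- obtaining the static $\epsilon$-regularity with constants depending only on $n$ --- since that is where the non-scale-invariant self-shrinker structure must be reconciled with a scaling argument; once one agrees to work on the associated mean curvature flow, the remaining two steps are the now-standard local smoothing and interior estimates for the flow.
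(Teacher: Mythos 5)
Your proposal is correct and follows the same route the paper does: the paper offers no proof of its own, only the remark preceding the lemma that the result follows from Corollary~3.11 and Theorem~3.7 of Li--Wang \cite{[LW]} together with the Ecker--Huisken interior estimates collected in \cite{[Ma]}, which are exactly the three ingredients you deploy for the three conclusions. Beyond matching the citations, you correctly identify the one genuine subtlety the paper leaves implicit --- that the drift $\nabla f\cdot\nabla$ carries a basepoint-dependent coefficient, so a naive elliptic Choi--Schoen/Moser run would not yield constants depending only on $n$, and one must pass to the translation- and parabolic-scale-invariant flow $\Sigma_t=\sqrt{-t}\,\Sigma$ to apply the Li--Wang and Ecker--Huisken machinery.
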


The following compactness result of mean curvature flow is well-known. See \cite{[CY]} for a detailed proof. 
\begin{lem}[\textbf{Compactness of mean curvature flow}]\label{lem:compactness of MCF}
Let $\{(\Sigma^n_i,\x_i(t)),-1<t<1\}$ be a sequence of mean curvature flow properly immersed in $B(0,R)\subset\mathbb{R}^{n+1}$. Suppose that
$$\sup_{B(0,R)\cap\Sigma_{i,t}}|A|(\cdot,t)\leq \Lambda,\quad\forall\,t\in(-1,1)$$
for some $\Lambda>0$. Then a subsequence of $\{(B(0,R)\cap\Sigma_{i,t}),-1<t<1\}$ converges in smooth topology to a smooth mean curvature flow $\{\Sigma_{\infty,t},-1<t<1\}$ in $B(0,R)$. 
\end{lem}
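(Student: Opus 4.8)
The plan is to turn the uniform pointwise bound on $|A|$ into uniform bounds on all higher covariant derivatives of $A$ over compact space--time subregions, and then run the standard Cheeger--Gromov-type compactness argument for the resulting family of immersions. For the first step: along each flow $\partial_t\x_i=-H\n$, and the second fundamental form evolves by a heat-type equation whose reaction term is controlled polynomially by $|A|$ (Huisken's evolution equations, e.g. $\partial_t|A|^2=\Delta|A|^2-2|\nabla A|^2+2|A|^4$), so the interior estimates of Ecker and Huisken \cite{[Ma]} apply. The hypothesis $\sup_{B(0,R)\cap\Sigma_{i,t}}|A|\le\Lambda$ on $(-1,1)$ then gives, for every $k\ge1$, every $\rho<R$ and every $\tau\in(0,1)$, a constant $C_k=C_k(n,k,\Lambda,\rho,\tau)$ \emph{independent of $i$} with
$$\sup_{B(0,\rho)\cap\Sigma_{i,t}}|\nabla^kA|\le C_k\qquad\text{for }t\in[-1+\tau,\,1-\tau].$$

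Next I would set up uniform local graphs and rule out collapsing. Fix $t_0\in(-1,1)$ and a point $q$ on $B(0,\rho)\cap\Sigma_{i,t_0}$. Since $|A|\le\Lambda$, there is $r_0=r_0(n,\Lambda)>0$ so that in a suitable orthonormal frame the sheet of $B(0,\rho)\cap\Sigma_{i,t}$ through a point near $q$ is, for $t$ near $t_0$, the graph of a function $u_i(\cdot,t)$ over the $r_0$-disc of $T_q\Sigma_{i,t_0}$ with $|\nabla u_i|$ bounded and $|\nabla^2 u_i|\le c(n)\Lambda$; the estimate above, together with $\partial_t\x_i=-H\n$, upgrades this to a uniform-in-$i$ bound on $\|u_i\|_{C^k}$ on these discs, in space and in time. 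The bound $|A|\le\Lambda$ also gives a uniform lower bound on the volume ratio, hence on the injectivity radius, so sheets through nearby points stay uniformly separated and cannot pile up.

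Finally I would extract the limit: cover $B(0,R)$ by countably many charts as above, exhaust $(-1,1)$ by compact intervals, and on each chart apply Arzel\`a--Ascoli to the uniformly $C^\infty$-bounded graph functions $u_i$; a diagonal subsequence then produces a family $\{\Sigma_{\infty,t}\}_{-1<t<1}$ of properly immersed hypersurfaces in $B(0,R)$ to which $\{\Sigma_{i,t}\}$ converges in $C^\infty_{\mathrm{loc}}$, and smooth convergence passes $\partial_t\x=-H\n$ to the limit, so $\{\Sigma_{\infty,t}\}$ is a smooth mean curvature flow in $B(0,R)$, properness being inherited because the limit is locally a uniform graph of bounded slope. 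The analytically essential input is the Ecker--Huisken interior regularity that converts a bound on $|A|$ alone into bounds on every $|\nabla^kA|$, which I quote from \cite{[Ma]}; the remaining work -- and the only place one must be careful -- is the bookkeeping in the last step: choosing frames and charts compatibly in $i$ so that the transition maps, and the passage from ``graph over a moving tangent plane'' to an honest family of immersions, converge. This is the usual Cheeger--Gromov diagonal procedure; see \cite{[CY]} for a full treatment.
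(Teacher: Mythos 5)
The paper does not prove this lemma: it states it as a known result and points the reader to \cite{[CY]} for a detailed proof. Your sketch is the standard route---bootstrap the $|A|$ bound to uniform $|\nabla^kA|$ bounds via the Ecker--Huisken interior estimates, write each flow locally as a uniformly controlled family of graphs over moving tangent planes, then run Arzel\`a--Ascoli and a diagonal argument---so there is no methodological divergence to report against a proof the paper does not itself supply.

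There is, however, one step of your sketch that does not hold as written. You assert that $|A|\le\Lambda$ gives a uniform lower bound on the volume ratio and on the injectivity radius, and from this conclude that sheets through nearby points ``stay uniformly separated and cannot pile up.'' The bounds you invoke control the \emph{intrinsic} geometry of each connected sheet, but say nothing about the \emph{extrinsic} separation between distinct sheets. A sequence of $i$ parallel hyperplanes with spacing $1/i$ inside $B(0,R)$ is a static mean curvature flow with $|A|\equiv 0$ and $|\nabla^kA|\equiv0$ for all $k$, yet no subsequence converges in smooth topology in $B(0,R)$: the number of graph functions per chart is unbounded, and your Arzel\`a--Ascoli/diagonal step has nothing finite to act on. Ruling this out requires a uniform local area bound (equivalently a uniform bound on the sheet multiplicity through balls of fixed radius), which must be added as a hypothesis rather than derived from $|A|\le\Lambda$. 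The lemma as displayed in the paper shares this omission, but in the only place it is invoked---the proof of Theorem \ref{theo:compactness}---the missing input is in fact available: the entropy normalization together with Lemma \ref{lem:vol est} gives a uniform area upper bound on balls, and Lemma \ref{lem:volume ratio lower bound} gives a per-sheet area lower bound, so the multiplicity is uniformly controlled. With that bound in hand, the rest of your graph-and-diagonal argument goes through.
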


\section{$L^p$ estimate and growth rate}

Throughout the section we set 
\beqs
\sup_{\Sigma}|HA|\leq K.
\eeqs

\begin{prop}[\textbf{$L^p$ estimate}]\label{prop:Lp est}
Let $\x:\Sigma^n\to\mathbb{R}^{n+1}$ be a properly immersed self-shrinker with $\sup_{\Sigma}|HA|\leq K$. Then for any $p\geq4$ there exists positive constants $a=a(n,p,K)$ and $C=C(n,p,K,\int_{\Sigma}e^{-f},\int_{B(0,r_0)\cap\Sigma}|A|^p)$ where $r_0=c(n,p)(1+K)$ such that 
\beqs
\int_{\Sigma}|A|^p(|x|^2+1)^{-a}\leq C.
\eeqs
Moreover, for any $x\in\Sigma$,
\beqs
\int_{B(x,1)\cap\Sigma}|A|^p\leq C(|x|^2+1)^a.
\eeqs
\end{prop}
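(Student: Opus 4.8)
The engine is the elliptic Simons identity (\ref{eqn6}), which I rewrite with the drift Laplacian $\mathcal{L}u:=\Delta u-\nabla f\cdot\nabla u$ (self-adjoint for $e^{-f}\,dv$) as $\mathcal{L}|A|^2=2|\nabla A|^2+(1-2|A|^2)|A|^2$; Kato's inequality then gives $\mathcal{L}|A|\ge(\tfrac12-|A|^2)|A|$ weakly. Testing against a compactly supported $|A|^{p-2}\varphi^2$ and integrating by parts would produce an $L^p$-Caccioppoli estimate were it not for the destabilizing quartic term $-2|A|^4$, which pushes $\int|A|^{p+2}\varphi^2e^{-f}$ to the wrong side. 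The hypothesis $|HA|=|H|\,|A|\le K$ is exactly what removes this: $|A|^4H^2=|A|^2|HA|^2\le K^2|A|^2$ is subcritical. So I would instead test the identity for $|A|^2$ against $|A|^{p-2}H^2\varphi^2$, whereupon the bad term becomes $-2\int|A|^p|HA|^2\varphi^2e^{-f}\ge-2K^2\int|A|^p\varphi^2e^{-f}$, which is of admissible order.

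The price of the $H^2$-factor is a cross term $\int|A|^{p-2}\langle\nabla|A|^2,\nabla H^2\rangle\varphi^2e^{-f}$. Using $|\nabla H|\le\sqrt n\,|\nabla A|$ and Young's inequality it splits into a piece absorbed by the good left-hand gradient term $\int|A|^{p-2}H^2|\nabla A|^2\varphi^2e^{-f}$ and a remainder $\le C_\epsilon\int|A|^p|\nabla H|^2\varphi^2e^{-f}$, which I would control by testing the companion identity (\ref{eqn8}) for $H^2$ against $|A|^p\varphi^2e^{-f}$ --- where the same $|HA|\le K$ trick tames \emph{its} quartic term, and the elementary bound $|A|^{m-1}\le\tfrac12(|A|^m+|A|^{m-2})$ lets the odd-power integrals fold back in. (If $H\equiv0$ then $\Sigma$ is a hyperplane through $0$ with $A\equiv0$ by Lemma \ref{lem:hyperplane}, so assume $H\not\equiv0$; since $H$ solves the elliptic equation (\ref{eqn7}), $\{H=0\}$ is then negligible, or one replaces $H^2$ by $H^2+\delta$ and lets $\delta\to0$.) A finite cascade of such reductions --- each using $|HA|\le K$ to descend from a power $|A|^{m+2}$ to $|A|^m$, down to $\int\varphi^2e^{-f}\le\int_\Sigma e^{-f}$ --- together with the polynomial volume bound of Lemma \ref{lem:vol est}, yields a weighted Caccioppoli inequality of the form $\int|A|^p\varphi^2e^{-f}\le C_1\int|A|^p|\nabla\varphi|^2e^{-f}+C_2$ with $C_1,C_2$ depending on $n,p,K$, on $\int_\Sigma e^{-f}$, and (through the iteration) on $\int_{B(0,r_0)\cap\Sigma}|A|^p$ with $r_0=c(n,p)(1+K)$.

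It remains to convert this Gaussian-weighted bound into the polynomial-weight statement. By (\ref{eqn3}), $|\nabla f|^2=f-H^2$; on $\{|A|\ge1\}$ the hypothesis forces $H^2\le K^2$, so $|\nabla f|^2\ge\tfrac12 f$ on $\Sigma\setminus B(0,r_0)$ when $r_0=c(n,p)(1+K)$ --- that is, $f$ is genuinely coercive away from $B(0,r_0)$ (and $\nabla^2 f=\tfrac12 g-HA$ is bounded by (\ref{eqn1}), so $f$ behaves like $|x|^2/4$ with controlled Hessian) --- while on $\{|A|<1\}$ the integrand $|A|^p(1+|x|^2)^{-a}$ is dominated by the integrable function $(1+|x|^2)^{-a}$ for $a>n/2$. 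Iterating the Caccioppoli inequality over the dyadic annuli $\{2^j\le|x|\le2^{j+1}\}$ with a cutoff adapted to the level sets of $f$ --- where the exponential decay of $e^{-f}$, which beats the polynomial volume growth (Lemma \ref{lem:vol est}) and the volume ratio bound (Lemma \ref{lem:volume ratio lower bound}), is what forces the bound to improve rather than deteriorate from one scale to the next --- one finds that $\int_{B(0,2^{j+1})\cap\Sigma}|A|^p$ grows by at most a fixed polynomial factor per scale, the innermost scale $r_0$ being where the term $\int_{B(0,r_0)\cap\Sigma}|A|^p$ enters; summing the series gives $\int_\Sigma|A|^p(1+|x|^2)^{-a}\le C$ for a suitable $a=a(n,p,K)$. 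The ``moreover'' is then immediate: for $x\in\Sigma$, $\int_{B(x,1)\cap\Sigma}|A|^p\le\big(\sup_{B(x,1)\cap\Sigma}(1+|y|^2)^a\big)\int_{B(x,1)\cap\Sigma}|A|^p(1+|y|^2)^{-a}\le C(|x|^2+1)^a$.

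I expect the main obstacle to be the bookkeeping behind the weighted Caccioppoli inequality: arranging that every cross term produced by $\nabla H$ and $\nabla\varphi$ is either absorbed into a good left-hand gradient term or reduced to a strictly lower power of $|A|$ handled by the cascade, so that no uncontrolled $\int|A|^{p+2}\varphi^2e^{-f}$ ever re-appears --- and then, in the last step, verifying that the constants can be chosen uniformly across the annular iteration so that the growth in $|x|$ is polynomial and not exponential. By comparison, the reduction to $H\not\equiv0$, the regularization at $\{H=0\}$, and the treatment of $\{|A|<1\}$ are routine.
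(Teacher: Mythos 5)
Your proposal identifies the right structural role of the hypothesis $|HA|\le K$ (it makes what would otherwise be super-critical terms sub-critical), but it departs from the paper's argument at two points, and both create gaps that would require a genuinely new idea to close.

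First, the $H^2$ in the test function. You test Simons' identity (\ref{eqn6}) against $|A|^{p-2}H^2\varphi^2 e^{-f}$ so that the quartic term becomes $-2|A|^p|HA|^2\ge-2K^2|A|^p$. This does tame that one term, but every \emph{good} (positive) term on the left-hand side inherits the same $H^2$ factor: what you actually control is $\int|A|^p H^2\varphi^2 e^{-f}$ and $\int|\nabla A|^2|A|^{p-2}H^2\varphi^2 e^{-f}$, not $\int|A|^p\varphi^2 e^{-f}$. Regularizing $H^2$ by $H^2+\delta$ and sending $\delta\to0$ only recovers the $H^2$-weighted quantity, not the unweighted one; without a uniform positive lower bound on $H^2$ (which is not available and is false for interesting examples) you cannot remove the $H^2$. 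So the Caccioppoli inequality you state as the outcome, $\int|A|^p\varphi^2 e^{-f}\le C_1\int|A|^p|\nabla\varphi|^2 e^{-f}+C_2$, does not follow. The paper never needs this trick: it tests (\ref{eqn6}) against $|A|^{p-4}(f+1)^{-a}\phi$, so $(|A|^2-\tfrac12)|A|^2\cdot|A|^{p-4}=|A|^p-\tfrac12|A|^{p-2}$ is of the \emph{same} order $p$, not higher --- the top-order term simply carries a constant coefficient $c$ which is then absorbed (see next point). The hypothesis $|HA|\le K$ is invoked instead via (\ref{eqn4}), where $\langle HA^2,A\rangle|A|^{p-2}\le K|A|^p$, and via (\ref{eqn8}), where $(|A|^2-\tfrac12)H^2\le|HA|^2\le K^2$.

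Second, and more serious, the Gaussian weight cannot be converted into the polynomial-weight conclusion. Even granting the Caccioppoli inequality and letting $\varphi\to1$, you obtain at best $\int_\Sigma|A|^p e^{-f}\le C$, which only gives $\int_{B(0,r)\cap\Sigma}|A|^p\le Ce^{r^2/4}$ --- exponential in $r$, not polynomial. The dyadic iteration you sketch does not repair this: at radius $2^j$ the factor $e^{f}\approx e^{4^j/4}$ is doubly exponential in $j$ and is never cancelled by anything you have. Your heuristic that ``the exponential decay of $e^{-f}$ \ldots forces the bound to improve'' inverts the difficulty: a rapidly decaying weight \emph{discards} information at infinity, so a finite $e^{-f}$-weighted integral places almost no constraint on the unweighted integral over a far-away annulus. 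The paper sidesteps this entirely by integrating against the polynomial weight $(f+1)^{-a}\phi$ from the start. The crucial structural device is that, after moving $\int\Delta f(f+1)^{-a}|A|^p\phi$ to the left, one obtains a coercive term
\begin{equation*}
\int_\Sigma\Big(a\tfrac{f-H^2}{f+1}+H^2-\tfrac{n}{2}\Big)(f+1)^{-a}|A|^p\phi\ \ge\ (a-n)\int_{\Sigma\setminus B(0,r_0)}|A|^p(f+1)^{-a}\phi,
\end{equation*}
and since $a$ is a \emph{free} parameter one takes $a=n+p+cK+1$ so that $a-n-cK>0$ dominates all the coefficients $cK$ produced by (\ref{eqn4}), (\ref{eqn6}), (\ref{eqn8}). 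The polynomial weight is not cosmetic: it provides the coercivity and keeps control at every scale simultaneously. (Note also that the Gaussian-weighted global integrations of Section 4 of the paper are only legal \emph{after} the polynomial growth of Section 3 is established; invoking them beforehand is circular.) Your third paragraph also reaches for Lemma \ref{lem:volume ratio lower bound}, which gives a volume \emph{lower} bound and points the wrong way for the estimate you need.

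In short: the mechanism of the proof is (i) integrate against $(f+1)^{-a}\phi$ so a coercive $(a-n)\int|A|^p(f+1)^{-a}\phi$ appears; (ii) use (\ref{eqn4}) to rewrite $\nabla f\cdot\nabla|A|^p$, then (\ref{eqn8}) and (\ref{eqn6}) to close the $\int|\nabla H|^2|A|^p$ and $\int|\nabla A|^2|A|^{p-4}$ auxiliaries, with $|HA|\le K$ controlling every potentially super-critical term; (iii) choose $a$ large depending on $n,p,K$ and let the cutoff radius go to infinity. Neither the $H^2$ test-function device nor the Gaussian-to-polynomial conversion in your proposal can substitute for step (i).
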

\begin{proof}
We always use $c$ to denote a nonegative constant depending only on $n$ and $p$.
For $a>0$ and $p>1$, integrating by parts we have
\beqn
&&a\int_{\Sigma}|\nabla f|^2(f+1)^{-a-1}|A|^p\phi
=-\int_{\Sigma}\nabla f\cdot\nabla(f+1)^{-a}|A|^p\phi\label{est6}\\
&=&\int_{\Sigma}\Delta f(f+1)^{-a}|A|^p\phi
+\int_{\Sigma}\nabla f\cdot\nabla|A|^p\cdot(f+1)^{-a}\phi\nonumber\\
&&+\int_{\Sigma}\nabla f\cdot\nabla\phi\cdot(f+1)^{-a}|A|^p.\nonumber
\eeqn
Note that
\beqs
a|\nabla f|^2(f+1)^{-a-1}-\Delta f(f+1)^{-a}
=\Big(a\frac{f-H^2}{f+1}+H^2-\frac{n}2\Big)(f+1)^{-a}.
\eeqs
Since $H$ is bounded by $n^{\frac14}K^{\frac12}$ there exists a positive constant 
\beqn
r_0=\Big(\frac8n(1+n^{1/2}K)a\Big)^{1/2}\label{r_0}
\eeqn
such that 
\beqs
a\frac{f-H^2}{f+1}+H^2-\frac{n}2\geq a-n, \quad\forall\;|x|\geq r_0.
\eeqs
Let $\phi(x):=\eta(|x|^2)$ be a cutoff where $\eta:[0,\infty)\to\mathbb{R}$ is a nonnegative decreasing Lipschitz function. Thus
\beqs
\nabla f\cdot\nabla\phi=4\eta'|\nabla f|^2\leq0.
\eeqs
Moreover, for any $r>0$ and $0<\delta<1$ fixed, let
$$\eta\equiv1\quad\mbox{on}\;[0,r^2]\;;\quad \eta\equiv0\quad\mbox{on}\;[4r^2,\infty)\;;\quad |\eta'|\leq\frac{1}{3\delta r^2}\eta^{1-\delta},$$
which implies
$$\phi^{-1}|\nabla\phi|^2\leq \frac{16}{9\delta^2r^2}\phi^{1-2\delta}.$$
Back to (\ref{est6}),
\beqn
&&(a-n)\int_{\Sigma}|A|^p(f+1)^{-a}\phi\label{est7}\\
&\leq& \int_{\Sigma}\nabla f\cdot\nabla|A|^p\cdot(f+1)^{-a}\phi+\int_{\Sigma}4\eta'|\nabla f|^2(f+1)^{-a}|A|^p+C_1,\nonumber
\eeqn
where
\beqn
C_1&=&C_1(p)\label{C_1}\\
&:=&\int_{\{|x|\leq r_0\}\cap\Sigma}\Big(-a\frac{f-H^2}{f+1}-H^2+\frac{n}2+a-n\Big)(f+1)^{-a}|A|^p\phi\nonumber\\
&\leq& \int_{\{|x|\leq r_0\}\cap\Sigma}\Big(a\frac{1+H^2}{f+1}-\frac{n}2\Big)(f+1)^{-a}|A|^p\nonumber\\
&\leq& ca(1+K)\int_{\{|x|\leq r_0\}\cap\Sigma}|A|^p(f+1)^{-a-1}<\infty.\nonumber
\eeqn
Using (\ref{eqn4}) and integrating by parts we get
\beqn
&&\int_{\Sigma}\nabla f\cdot\nabla|A|^p\cdot(f+1)^{-a}\phi
=p\int_{\Sigma}\frac12\langle\x,\nabla A\rangle A|A|^{p-2}(f+1)^{-a}\phi\label{est8}\\
&\leq& p\int_{\Sigma}\Big(\nabla^2 H+HA^2-\frac12A\Big)A|A|^{p-2}(f+1)^{-a}\phi\nonumber\\
&\leq& p\int_{\Sigma}\nabla^2 H\cdot A\cdot|A|^{p-2}(f+1)^{-a}\phi
+(pK-\frac{p}2)\int_{\Sigma}|A|^p(f+1)^{-a}\phi\nonumber\\
&\leq& p(p-1)\int_{\Sigma}|\nabla H||\nabla A||A|^{p-2}(f+1)^{-a}\phi+ap\int_{\Sigma}|\nabla H||\nabla f||A|^{p-1}(f+1)^{-a-1}\phi\nonumber\\
&&+p\int_{\Sigma}|\nabla H||\nabla\phi||A|^{p-1}(f+1)^{-a}+(pK-\frac{p}2)\int_{\Sigma}|A|^p(f+1)^{-a}\phi.\nonumber
\eeqn
Using the Caucuy-Schwarz inequality and the Young's inequality, we estiamte the right hand side of (\ref{est8}) above as follows:
\beqs
&&p(p-1)\int_{\Sigma}|\nabla H||\nabla A||A|^{p-2}(f+1)^{-a}\phi\\
&\leq& cK^{-1}\int_{\Sigma}|\nabla H|^2|A|^{p}(f+1)^{-a}\phi
+cK\int_{\Sigma}|\nabla A|^2|A|^{p-4}(f+1)^{-a}\phi,
\eeqs

\beqs
&&ap\int_{\Sigma}|\nabla H||\nabla f||A|^{p-1}(f+1)^{-a-1}\phi\\
&\leq& cK^{-1}\int_{\Sigma}|\nabla H|^2|A|^{p}(f+1)^{-a}\phi
+ca^2K\int_{\Sigma}|\nabla f|^2|A|^{p-2}(f+1)^{-a-2}\phi\\
&\leq& cK^{-1}\int_{\Sigma}|\nabla H|^2|A|^{p}(f+1)^{-a}\phi
+ca^2K\int_{\Sigma}|A|^{p-2}(f+1)^{-a-1}\phi,
\eeqs

\beqs
&&ca^2\int_{\Sigma}|A|^{p-2}(f+1)^{-a-1}\phi
=\int_{\Sigma}|A|^{p-2}(f+1)^{-\frac{p-2}{p}a}\cdot ca^2(f+1)^{-\frac2pa-1}\cdot\phi\\
&\leq& c\int_{\Sigma}|A|^{p}(f+1)^{-a}\phi
+ca^p\int_{\Sigma}(f+1)^{-a-\frac{p}2}\phi,
\eeqs

\beqs
&&p\int_{\Sigma}|\nabla H||\nabla\phi||A|^{p-1}(f+1)^{-a}\\
&\leq& cK^{-1}\int_{\Sigma}|\nabla H|^2|A|^{p}(f+1)^{-a}\phi
+cK\int_{\Sigma}\phi^{-1}|\nabla\phi|^2|A|^{p-2}(f+1)^{-a}\\
&\leq& cK^{-1}\int_{\Sigma}|\nabla H|^2|A|^{p}(f+1)^{-a}\phi
+\frac{cK}{\delta^2r^2}\int_{\Sigma}\phi^{1-2\delta}|A|^{p-2}(f+1)^{-a},
\eeqs

\beqs
&&\frac{c}{\delta^2r^2}\int_{\Sigma}\phi^{1-2\delta}|A|^{p-1}(f+1)^{-a}
=\int_{\Sigma}|A|^{p-1}\phi^{\frac{p-1}{p}}\cdot\frac{c}{\delta^2r^2}\phi^{\frac{1-2p\delta}{p}}\cdot(f+1)^{-a}\\
&\leq& c\int_{\Sigma}|A|^p(f+1)^{-a}\phi
+\frac{c}{\delta^{2p}r^{2p}}\int_{\Sigma}\phi^{1-2p\delta}(f+1)^{-a}.
\eeqs
Let $\delta=\frac1{4p}$ so that $1-2p\delta=\frac12>0$. Then plugging the estimates above into (\ref{est8}) yidles
\beqn
&&\int_{\Sigma}\nabla f\cdot\nabla|A|^p\cdot(f+1)^{-a}\phi\label{est9}\\
&\leq& cK^{-1}\int_{\Sigma}|\nabla H|^2|A|^{p}(f+1)^{-a}\phi
+cK\int_{\Sigma}|\nabla A|^2|A|^{p-4}(f+1)^{-a}\phi\nonumber\\
&&+cK\int_{\Sigma}|A|^{p}(f+1)^{-a}\phi\nonumber
+c(a^p+r^{-2p})K\int_{\Sigma}(f+1)^{-a}.\nonumber
\eeqn
Furthermore, using (\ref{eqn8}) we have
\beqs
&&\int_{\Sigma}|\nabla H|^2|A|^{p}(f+1)^{-a}\phi\\
&=&\int_{\Sigma}\Big(\frac12\Delta H^2-\frac12\nabla f\cdot\nabla H^2+(|A|^2-\frac12)H^2\Big)|A|^{p}(f+1)^{-a}\phi\\
&\leq& -\frac{p}2\int_{\Sigma}\nabla H^2\cdot\nabla A\cdot A\cdot |A|^{p-2}(f+1)^{-a}\phi
+\frac{a}2\int_{\Sigma}\nabla H^2\cdot\nabla f\cdot |A|^{p}(f+1)^{-a-1}\phi\\
&&-\frac12\int_{\Sigma}\nabla H^2\cdot\nabla\phi\cdot|A|^{p}(f+1)^{-a}
-\frac12\int_{\Sigma}\nabla f\cdot\nabla H^2\cdot|A|^{p}(f+1)^{-a}\phi\\
&&+K^2\int_{\Sigma}|A|^p(f+1)^{-a}\phi.
\eeqs
Then,
\beqs
&&\int_{\Sigma}|\nabla H|^2|A|^{p}(f+1)^{-a}\phi\\
&\leq& pK\int_{\Sigma}|\nabla H||\nabla A||A|^{p-2}(f+1)^{-a}\phi
+(a+1)K\int_{\Sigma}|\nabla H||\nabla f||A|^{p-1}(f+1)^{-a}\phi\\
&& +K\int_{\Sigma}|\nabla H||\nabla\phi||A|^{p-1}(f+1)^{-a}
+K^2\int_{\Sigma}|A|^p(f+1)^{-a}\phi\\
&\leq& \frac14\int_{\Sigma}|\nabla H|^2|A|^p(f+1)^{-a}\phi
+p^2K^2\int_{\Sigma}|\nabla A|^2|A|^{p-4}(f+1)^{-a}\phi\\
&&+\frac14\int_{\Sigma}|\nabla H|^2|A|^p(f+1)^{-a}\phi
+(a+1)^2K^2\int_{\Sigma}|\nabla f|^2|A|^{p-2}(f+1)^{-a}\phi\\
&&+\frac14\int_{\Sigma}|\nabla H|^2|A|^p(f+1)^{-a}\phi
+K^2\int_{\Sigma}\phi^{-1}|\nabla\phi|^2|A|^{p-2}(f+1)^{-a}\\
&&+K^2\int_{\Sigma}|A|^p(f+1)^{-a}\phi.
\eeqs
Thus,
\beqn
&&\int_{\Sigma}|\nabla H|^2|A|^p(f+1)^{-a}\phi\label{est10}\\
&\leq& 4p^2K^2\int_{\Sigma}|\nabla A|^2|A|^{p-4}(f+1)^{-a}\phi
+4(a+1)^2K^2\int_{\Sigma}|A|^{p-2}(f+1)^{-a+1}\phi\nonumber\\
&&+cr^{-2}K^2\int_{\Sigma}\phi^{1-2\delta}|A|^{p-2}(f+1)^{-a}
+4K^2\int_{\Sigma}|A|^p(f+1)^{-a}\phi\nonumber\\
&\leq& cK^2\int_{\Sigma}|\nabla A|^2|A|^{p-4}(f+1)^{-a}\phi
+4K^2\int_{\Sigma}|A|^p(f+1)^{-a}\phi\nonumber\\
&&+cK^2\int_{\Sigma}|A|^p(f+1)^{-a}\phi
+c(a+1)^{p}K^2\int_{\Sigma}(f+1)^{-a+\frac{p}2}\nonumber\\
&&+cK^2\int_{\Sigma}|A|^p(f+1)^{-a}\phi
+cr^{-p}K^2\int_{\Sigma}(f+1)^{-a}\nonumber\\
&\leq& cK^2\int_{\Sigma}|\nabla A|^2|A|^{p-4}(f+1)^{-a}\phi
+cK^2\int_{\Sigma}|A|^p(f+1)^{-a}\phi\nonumber\\
&&+c\Big((a+1)^{p}+r^{-p}\Big)K^2\int_{\Sigma}(f+1)^{-a+\frac{p}2}.\nonumber
\eeqn
On the other hand, for any $p\geq4$ by (\ref{eqn6}) we have
\beqs
&&\int_{\Sigma}|\nabla A|^2|A|^{p-4}(f+1)^{-a}\phi\\
&=& \int_{\Sigma}\Big(\frac12\Delta|A|^2-\frac12\nabla f\cdot\nabla|A|^2+(|A|^2-\frac12)|A|^2\Big)|A|^{p-4}(f+1)^{-a}\phi\\
&\leq& \frac{a}2\int_{\Sigma}\nabla|A|^2\cdot\nabla f\cdot|A|^{p-4}(f+1)^{-a-1}\phi
-\frac12\int_{\Sigma}\nabla|A|^2\cdot\nabla\phi\cdot|A|^{p-4}(f+1)^{-a}\\
&&-\frac12\int_{\Sigma}\nabla f\cdot\nabla|A|^2\cdot|A|^{p-4}(f+1)^{-a}\phi
+\int_{\Sigma}|A|^p(f+1)^{-a}\phi.
\eeqs
Then,
\beqs
&&\int_{\Sigma}|\nabla A|^2|A|^{p-4}(f+1)^{-a}\phi\\
&\leq& (a+1)\int_{\Sigma}|\nabla A||\nabla f||A|^{p-3}(f+1)^{-a}\phi
+\int_{\Sigma}|\nabla A||\nabla\phi||A|^{p-3}(f+1)^{-a}\\
&&+\int_{\Sigma}|A|^p(f+1)^{-a}\phi\\
&\leq& \frac14\int_{\Sigma}|\nabla A|^2|A|^{p-4}(f+1)^{-a}\phi
+(a+1)^2\int_{\Sigma}|\nabla f|^2|A|^{p-2}(f+1)^{-a}\phi\\
&&+\frac14\int_{\Sigma}|\nabla A|^2|A|^{p-4}(f+1)^{-a}\phi
+\int_{\Sigma}\phi^{-1}|\nabla\phi|^2|A|^{p-2}(f+1)^{-a}\\
&&+\int_{\Sigma}|A|^p(f+1)^{-a}\phi,
\eeqs

\beqn
&&\int_{\Sigma}|\nabla A|^2|A|^{p-4}(f+1)^{-a}\phi\label{est11}\\
&\leq& 2(a+1)^2\int_{\Sigma}|A|^{p-2}(f+1)^{-a+1}\phi
+cr^{-2}\int_{\Sigma}\phi^{1-2\delta}|A|^{p-2}(f+1)^{-a}\nonumber\\
&&+\int_{\Sigma}|A|^p(f+1)^{-a}\phi\nonumber\\
&=& \int_{\Sigma}|A|^{p-2}(f+1)^{-\frac{p-2}{p}a}\cdot2(a+1)^2(\frac{|x|^2}{4}+1)^{-\frac2pa+1}\cdot\phi\nonumber\\
&&+\int_{\Sigma}|A|^{p-2}\phi^{\frac{p-2}{p}}\cdot cr^{-2}\phi^{\frac{2(1-p\delta)}{p}}\cdot(f+1)^{-a}
+\int_{\Sigma}|A|^p(f+1)^{-a}\phi\nonumber\\
&\leq& c\int_{\Sigma}|A|^p(f+1)^{-a}\phi
+c(a+1)^p\int_{\Sigma}(f+1)^{-a+\frac{p}2}\phi\nonumber\\
&&+c\int_{\Sigma}|A|^p(f+1)^{-a}\phi
+cr^{-p}\int_{\Sigma}(f+1)^{-a}\phi^{1-p\delta}\nonumber\\
&&+\int_{\Sigma}|A|^p(f+1)^{-a}\phi\nonumber\\
&\leq& c\int_{\Sigma}|A|^p(f+1)^{-a}\phi+c\Big((a+1)^p+r^{-p}\Big)\int_{\Sigma}(f+1)^{-a+\frac{p}2}.\nonumber
\eeqn

Combining (\ref{est9}), (\ref{est10}) and (\ref{est11}) we conclude
\beqs
&&\int_{\Sigma}\nabla f\cdot\nabla|A|^p\cdot(f+1)^{-a}\phi\\
&\leq& cK\int_{\Sigma}|\nabla A|^2|A|^{p-4}(f+1)^{-a}\phi
+cK\int_{\Sigma}|A|^{p}(f+1)^{-a}\phi\\
&&+c\Big(r^{-2p}+r^{-p}+(a+1)^{p}\Big)K\int_{\Sigma}(f+1)^{-a+\frac{p}2}\\
&\leq& cK\int_{\Sigma}|A|^{p}(f+1)^{-a}\phi
+c\Big(r^{-2p}+r^{-p}+(a+1)^{p}\Big)K\int_{\Sigma}(f+1)^{-a+\frac{p}2},
\eeqs 
which together with (\ref{est7}) implies
\beqs
(a-n-cK)\int_{\Sigma}|A|^p(f+1)^{-a}\phi
\leq c\Big(r^{-2p}+r^{-p}+(a+1)^p\Big)K\int_{\Sigma}(f+1)^{-a+\frac{p}2}+C_1.
\eeqs
Recall the volume estimate in Lemma \ref{lem:vol est}. Take 
$$a=n+p+cK+1$$ 
so that the right hand side above makes sense. Recall the settings (\ref{r_0}) and (\ref{C_1}) one sees
$$r_0\leq c(1+K),$$
$$C_1(p)\leq c(1+K)^2\int_{\{|x|\leq r_0\}\cap\Sigma}|A|^p.$$
Letting $r\to\infty$ yields 
\beqs
\int_{\Sigma}|A|^p(f+1)^{-a}\leq C(n,p,K)\int_{\Sigma}(f+1)^{-a+\frac{p}2}+c(1+K)^2\int_{\{|x|\leq r_0\}\cap\Sigma}|A|^p<\infty.
\eeqs
In particular, we restrict the integration on $B(x_0,1)\cap\Sigma$ for any $x_0\in\Sigma$, then 
\beqs
\Big(\frac{(|x_0|+1)^2}4+1\Big)^{-a}\int_{B(x_0,1)\cap\Sigma}|A|^p\leq\int_{B(x_0,1)\cap\Sigma}|A|^p(f+1)^{-a}\phi\leq C,
\eeqs
i.e.,
\beqs
\int_{B(x_0,1)\cap\Sigma}|A|^p\leq C(|x_0|^2+1)^a,
\eeqs
where 
$$a=a(n,p,K),\quad C=C(n,p,K,\int_{\Sigma}e^{-f},\int_{B(0,r_0)\cap\Sigma}|A|^p).$$
\end{proof}

\begin{theo}[\textbf{growth rate}]\label{theo:growth rate}
    Let $\x:\Sigma^n\to\mathbb{R}^{n+1}$ be a properly immersed self-shrinker with $\sup_{\Sigma}|HA|\leq K$. Then for any $p>\max\{n,4\}$ there exist positive constants $C=C(n,p,K,\int_{\Sigma}e^{-f},\int_{B(0,r_0)\cap\Sigma}|A|^p)$ where $r_0=c(n,p)(1+K)$ and $a=a(n,p,K)$ such that
    \beqs
    |A|(x)\leq C(|x|+1)^a,\quad \forall\,x\in\Sigma,
    \eeqs
    i.e., the second fundamental form grows at most polynomially in the distance.
\end{theo}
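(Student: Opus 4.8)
The plan is to run a Moser iteration for $|A|^2$, with the $L^p$ bound of Proposition~\ref{prop:Lp est} as input. The differential inequality needed comes from the Simons-type identity~(\ref{eqn6}): dropping the good term $|\nabla A|^2\ge0$ gives, with $u:=|A|^2\ge0$,
$$-\Delta u\le -\nabla f\cdot\nabla u+(2|A|^2-1)|A|^2\le -\nabla f\cdot\nabla u+2|A|^2\cdot u,$$
which is exactly the hypothesis of Lemma~\ref{lem:Moser iteration} with $\varphi=2|A|^2$, apart from the first-order drift $-\nabla f\cdot\nabla u$. The mean curvature term $\|H\|_{L^{n+2}}$ appearing in the iteration is harmless, since $H^2=|\tr(HA)|\le\sqrt n\,|HA|\le\sqrt n\,K$, i.e. $|H|\le n^{1/4}K^{1/2}$ on all of $\Sigma$.

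The real obstruction is that $|\nabla f|=\tfrac12|x^\top|$ is unbounded, so the drift cannot be discarded globally. I would handle this by localizing on balls of radius $\sim(1+|x_0|)^{-1}$. Fix $x_0\in\Sigma$, put $\rho=|x_0|$, $r_\rho=\tfrac1{2(1+\rho)}\le1$, and rescale $\Sigma$ by the factor $1/r_\rho$ (the shrinker remains centred at the origin), so that $B(x_0,2r_\rho)\cap\Sigma$ becomes a unit-scale ball $B(\hat x_0,2)\cap\hat\Sigma$ with $\hat x_0=x_0/r_\rho$. Let $\hat f$ and $\hat u$ be the pullbacks of $f$ and $|A|^2$; then on $B(\hat x_0,2)$ one gets $-\hat\Delta\hat u\le-\hat\nabla\hat f\cdot\hat\nabla\hat u+2|\hat A|^2\hat u$, and there $|\hat\nabla\hat f|\le\tfrac54$, while $|\hat\Delta\hat f|=r_\rho^2\,\big|\tfrac n2-H^2\big|\le\tfrac n2+\sqrt n\,K$ by~(\ref{eqn2}). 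Setting $\psi:=\hat f-\hat f(\hat x_0)$ (so $|\psi|\le\tfrac52$ on that ball) and $v:=e^{-\psi/2}\hat u\ge0$, a direct computation using $\hat\nabla\psi=\hat\nabla\hat f$ kills the drift and yields a pure-Laplacian inequality
$$-\hat\Delta v\le\Big(2|\hat A|^2+\tfrac12\hat\Delta\psi-\tfrac14|\hat\nabla\psi|^2\Big)v=:\Phi v,\qquad \Phi\le 2|\hat A|^2+c(n,K),$$
which is now exactly of the form demanded by Lemma~\ref{lem:Moser iteration}.

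Then I would apply Lemma~\ref{lem:Moser iteration} to $v$ on $\hat\Sigma$ with $r=2$, $\beta=2$, $q=p/2$: the hypothesis $p>\max\{n,4\}$ gives $q>n/2$ and $2q=p\ge4$, so Proposition~\ref{prop:Lp est} applies with exponent $2q=p$, and Hölder with exponents $p/4,p/(p-4)$ (here is where $p>4$ is used a second time) controls $L^4$-norms of $|A|$ by $L^p$-norms. Undoing the scaling: $\int_{B(\hat x_0,2)\cap\hat\Sigma}|\hat A|^p=r_\rho^{\,p-n}\int_{B(x_0,2r_\rho)\cap\Sigma}|A|^p\le\int_{B(x_0,1)\cap\Sigma}|A|^p\le C(|x_0|^2+1)^a$ by Proposition~\ref{prop:Lp est}; Lemma~\ref{lem:vol est} gives $\vol(B(x_0,s)\cap\Sigma)\le C(1+|x_0|)^n$ for $s\le1$, hence $\vol(B(\hat x_0,2)\cap\hat\Sigma)\le C(1+|x_0|)^{2n}$; and $|\hat H|=r_\rho|H|\le\sqrt n\,K$. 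Consequently $\|\Phi\|_{L^q}$, $\|\hat H\|_{L^{n+2}}^{n+2}$ and $\|v\|_{L^2}$ over $B(\hat x_0,2)\cap\hat\Sigma$ are each bounded by a power of $(1+|x_0|)$ whose exponent depends only on $n,p,K$ (through the $a=a(n,p,K)$ of Proposition~\ref{prop:Lp est}), with constants depending on $\int_\Sigma e^{-f}$ and $\int_{B(0,r_0)\cap\Sigma}|A|^p$. Plugging into the Moser estimate yields $\sup_{B(\hat x_0,1)\cap\hat\Sigma}v\le C(1+|x_0|)^{b}$ for some $b=b(n,p,K)$; since $\psi(\hat x_0)=0$ we have $v(\hat x_0)=\hat u(\hat x_0)=|A|^2(x_0)$, so $|A|(x_0)\le C(1+|x_0|)^{b/2}$. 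As $x_0\in\Sigma$ is arbitrary this is the claim, with exponent $b/2=a(n,p,K)$ and $C=C(n,p,K,\int_\Sigma e^{-f},\int_{B(0,r_0)\cap\Sigma}|A|^p)$.

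Beyond the (routine) bookkeeping of exponents, the step I expect to require the most care is checking that, after rescaling to a unit-scale ball, the substitution $v=e^{-\psi/2}\hat u$ produces coefficients $\hat\nabla\hat f$, $\hat\Delta\hat f$ bounded purely in terms of $n$ and $K$ on that ball: this is precisely what forces the localizing radius to decay like $(1+|x_0|)^{-1}$, and one must verify that this choice keeps the final dependence on $|x_0|$ polynomial rather than exponential. Alternatively, one may dispense with the substitution $v=e^{-\psi/2}\hat u$ altogether by invoking that the Michael--Simon-based iteration underlying Lemma~\ref{lem:Moser iteration} tolerates a first-order term with coefficient bounded on the ball (implicit in the sources from which it is quoted); the rescaling to radius $(1+|x_0|)^{-1}$ is still needed to make that coefficient bounded.
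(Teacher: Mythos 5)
Your argument is correct, but it takes a substantially more elaborate route than the paper's. The paper handles the drift term $\nabla f\cdot\nabla|A|^2$ by the elementary observation that it can be absorbed into the good term $2|\nabla A|^2$ already present in~(\ref{eqn6}): writing $\nabla f\cdot\nabla|A|^2\geq -2|A||\nabla f||\nabla A|\geq -\tfrac12|\nabla f|^2|A|^2-2|\nabla A|^2$ and using $|\nabla f|^2\leq f$ gives directly
$$-\Delta|A|^2\leq\Big(\tfrac{|x|^2}{8}+2|A|^2\Big)|A|^2,$$
a pure-Laplacian inequality with no drift and with $\varphi=\tfrac{|x|^2}{8}+2|A|^2$. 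One then applies Lemma~\ref{lem:Moser iteration} directly on balls $B(x_0,1)\cap\Sigma$ (no rescaling), using that $\|\varphi\|_{L^q(B(x_0,1))}$ is polynomially bounded in $|x_0|$ by Lemma~\ref{lem:vol est} and Proposition~\ref{prop:Lp est}. This makes both your rescaling to radius $(1+|x_0|)^{-1}$ and your exponential substitution $v=e^{-\psi/2}\hat u$ unnecessary; they are correct but solve a problem that the Simons-type identity has already solved for you. One small thing your version buys: by taking $\beta=2$ rather than the paper's $\beta=n/2$ in Lemma~\ref{lem:Moser iteration}, you avoid the implicit constraint $n/2\geq 2$ (and hence avoid needing the $L^n$ bound from Proposition~\ref{prop:Lp est}, whose hypothesis is $p\geq4$), so your route works uniformly in $n$ whereas the paper's choice of $\beta$ only literally fits the lemma's hypotheses when $n\geq4$. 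Minor bookkeeping aside (e.g.\ the bound $|\hat\nabla\hat f|\leq 5/4$ can be sharpened to $\leq 1/2$, and $|\hat H|\leq n^{1/4}K^{1/2}$ rather than $\sqrt{n}K$), the proposal is sound.
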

\begin{proof}
Fix $q>\max\{n/2,2\}$.
From (\ref{eqn6}) we know
\beqs
\Delta|A|^2
&=&\nabla f\cdot\nabla|A|^2+2|\nabla A|^2+(1-2|A|^2)|A|^2\\
&\geq& \Big(-\frac12|\nabla f|^2+1-2|A|^2\Big)|A|^2\\
&\geq& -\Big(\frac{|x|^2}8+2|A|^2\Big)|A|^2.
\eeqs
If we set $\varphi:=\frac{|x|^2}8+2|A|^2$, then 
\beqs
-\Delta|A|^2\leq \varphi|A|^2.
\eeqs
Recall that $\sup_{\Sigma}|H|\leq c_n K^{\frac12}$. Fix $x_0\in \Sigma$. Applying the standard Moser iteration Lemma \ref{lem:Moser iteration} yields that for any $\beta=\frac{n}2<q$
\beqs
\sup_{B(x_0,1)\cap\Sigma}|A|^2
&\leq& C(n,q)\Big(\|\varphi\|_{L^{q}(B(x_0,1)\cap\Sigma)}^{\frac{2q}{2q-n}}
+\|H\|_{L^{n+2}(B(x_0,1)\cap\Sigma)}^{n+2}\Big)^{2n}
\||A|^2\|_{L^{\frac{n}2}(B(x_0,1)\cap\Sigma)},
\eeqs
where by Lemma \ref{lem:vol est} and Proposition \ref{prop:Lp est}
\beqs
\|\varphi\|_{L^{q}(B(x_0,1)\cap\Sigma)}
&\leq& \frac18\||x|^2\|_{L^{q}(B(x_0,1)\cap\Sigma)}+2\||A|^2\|_{L^{q}(B(x_0,1)\cap\Sigma)}\\
&\leq& C(|x_0|^2+1)^{1+\frac{n}{2q}}+C(|x_0|^2+1)^{\frac{a'}{2q}},
\eeqs
\beqs
\|H\|_{L^{n+2}(B(x_0,1)\cap\Sigma)}^{n+2}\leq CK^{\frac{n+2}2}(|x_0|^2+1)^{\frac{n}2},
\eeqs
\beqs
\||A|^2\|_{L^{\frac{n}2}(B(x_0,1)\cap\Sigma)}\leq C(|x_0|^2+1)^{\frac{2a''}{n}}.
\eeqs
Finally we conclude for any $x\in\Sigma$,
\beqs
|A|(x)\leq C(|x|+1)^a,
\eeqs
for constants
$$a=a(n,q,K),$$
$$C=C(n,q,K,\int_{\Sigma}e^{-f},C_1(2q),C_1(n))=C(n,q,K,\int_{\Sigma}e^{-f},\int_{B(0,r_0)\cap\Sigma}|A|^{2q}),$$
where $r_0=c(n,q)(1+K)$.
So is Theorem \ref{theo:growth rate} proved.
\end{proof}

\section{Gap and compactness theorems}

Since Lemma \ref{lem:vol est} and Theorem \ref{theo:growth rate} show the polynomial growth, now we can consider global integrations with the natural weight $e^{-f}$, which leads to simplier calculations. As in the previous section, we set
\beqs
\sup_{\Sigma}|HA|\leq K.
\eeqs
We will use the notation $\Delta_{f}=\Delta-\nabla f\cdot\nabla$ which is self adjoint with respect to the weighted volume $e^{-f}dv$.
\begin{theo}[\textbf{gap theorem}]\label{theo:gap}
Let $\x:\Sigma^n\to\mathbb{R}^{n+1}$ be a properly immersed self-shrinker. If $\sup_{\Sigma}|HA|\leq\frac1{\sqrt{n}(n+5)^4}$, then $A\equiv0$.
\end{theo}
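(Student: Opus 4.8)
The plan is to convert the pointwise smallness of $|HA|$ into a Gaussian-weighted integral gap for $|A|$, using that $\Delta_f=\Delta-\nabla f\cdot\nabla$ is self-adjoint with respect to $e^{-f}dv$. First I would note that, by Theorem~\ref{theo:growth rate} and Lemma~\ref{lem:vol est}, $|A|$ and $\vol(B(0,r)\cap\Sigma)$ grow at most polynomially, so every weighted integral that will occur — $\int_\Sigma|A|^k e^{-f}$, $\int_\Sigma|\nabla A|^2|A|^{k}e^{-f}$, $\int_\Sigma H^2|A|^{k}e^{-f}$, etc. — is finite, and integration by parts against $e^{-f}dv$ carries no boundary term (cutoff $\phi_r\equiv1$ on $B(0,r)$, $\phi_r\equiv0$ off $B(0,2r)$, $|\nabla\phi_r|\le c/r$, letting $r\to\infty$; the error is bounded by $r^{-1}(\text{polynomial})\,e^{-f}$ integrated, which tends to $0$). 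I would also record the elementary pointwise facts coming from $\sup_\Sigma|HA|\le K$: since $|H|\le\sqrt n\,|A|$, one has $H^2\le\sqrt n\,|HA|\le\sqrt n K$ everywhere, and $H^2|A|^2=|HA|^2\le K^2$.

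Next I would extract the basic weighted identities. Integrating \eqref{eqn6} and \eqref{eqn8} against $e^{-f}$ and using $\int_\Sigma\Delta_f(\cdot)e^{-f}=0$ gives
\[
\int_\Sigma|\nabla A|^2 e^{-f}=\int_\Sigma\Big(|A|^2-\tfrac12\Big)|A|^2 e^{-f},\qquad
\int_\Sigma|\nabla H|^2 e^{-f}=\int_\Sigma|HA|^2 e^{-f}-\tfrac12\int_\Sigma H^2 e^{-f};
\]
testing \eqref{eqn6} against $H^2 e^{-f}$ and \eqref{eqn8} against $|A|^2 e^{-f}$ (the two have the same left-hand side after integration by parts, and the same reaction term) yields the further identity $\int_\Sigma H^2|\nabla A|^2 e^{-f}=\int_\Sigma|A|^2|\nabla H|^2 e^{-f}$, and testing \eqref{eqn6} against $|A|^{2p-2}e^{-f}$ gives the higher-order analogues. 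The crux is then to run the same kind of weighted integration by parts as in the proof of Proposition~\ref{prop:Lp est}, but keeping the coupling with $H$: estimating every $\langle\nabla A,\nabla H\rangle$ cross-term by Cauchy--Schwarz/Young, using $|HA|\le K$ and $H^2\le\sqrt n K$ to make the bad coefficients small, and using the Kato inequality $|\nabla A|^2\ge|\nabla|A||^2$ to absorb gradient terms, one should arrive at an inequality of the form
\[
\Big(1-\sqrt n\,(n+5)^4\,K\Big)\int_\Sigma|A|^2 e^{-f}\ \le\ 0 .
\]
Under the hypothesis $K\le\big(\sqrt n\,(n+5)^4\big)^{-1}$ this forces $\int_\Sigma|A|^2 e^{-f}=0$, hence $A\equiv0$.

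A cleaner sub-goal that makes the endgame transparent is to prove $|A|^2\le\tfrac12$ pointwise — this is where the smallness of $K$ has to do its work, either via a Moser iteration for $|A|^2$ at unit scale (Lemma~\ref{lem:Moser iteration}, using $\sup_\Sigma|H|\le c_n\sqrt K$ and finiteness of $\int_\Sigma e^{-f}$) or via the auxiliary function $u=|A|^2+cH^2$, which satisfies $\Delta_f u\ge(1-2|A|^2)u\ge(1-2u)u$. Once $|A|^2\le\tfrac12$, the first identity above gives $\int_\Sigma|\nabla A|^2 e^{-f}=\int_\Sigma(|A|^2-\tfrac12)|A|^2 e^{-f}\le0$, so $\nabla A\equiv0$ and $(|A|^2-\tfrac12)|A|^2\equiv0$; by continuity (on each component) either $|A|\equiv0$ or $|A|^2\equiv\tfrac12$, and the latter is impossible since a self-shrinker with parallel second fundamental form and $|A|^2\equiv\tfrac12$ is a round cylinder $S^k(\sqrt{2k})\times\RR^{n-k}$ with $1\le k\le n$, for which $|HA|=\sqrt k/2\ge\tfrac12>K$.

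The main obstacle is exactly the step producing the displayed inequality: the reaction term $(\tfrac12-|A|^2)|A|^2$ in \eqref{eqn6} is destabilizing where $|A|^2>\tfrac12$, so integrating the $|A|^2$-equation by itself never closes — the smallness of $K$ enters only through $H$, so one is forced either to couple \eqref{eqn6} and \eqref{eqn8} or to prove $|A|^2\le\tfrac12$ first, and then to carry the dimensional constants through the chain of Cauchy--Schwarz/Young estimates sharply enough to land at the explicit threshold $\frac{1}{\sqrt n(n+5)^4}$. Everything else — convergence of the integrals, the self-adjointness bookkeeping, and the borderline cylinder case — is routine.
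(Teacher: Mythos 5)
Your main branch mirrors the paper's argument: integrate against $e^{-f}\,dv$ (with convergence and the vanishing of cutoff errors justified by the polynomial growth of $|A|$ and of volume from Theorem~\ref{theo:growth rate} and Lemma~\ref{lem:vol est}), trade $\nabla f\cdot\nabla A$ for $\nabla^2 H$ via \eqref{eqn4}, couple \eqref{eqn6} with \eqref{eqn8}, and close up by Cauchy--Schwarz, using $\sup_\Sigma|HA|\le K$ to shrink the bad coefficients. However, the intermediate inequality you predict, $(1-\sqrt n(n+5)^4K)\int_\Sigma|A|^2e^{-f}\le0$, is off in two respects that matter. The paper works with $|A|^p$ for $p=n+4$, not $|A|^2$ --- the manipulations behind \eqref{est4} and \eqref{est5} require $p\ge4$, and the choice $p=n+4$ is exactly what aligns the constants to produce the explicit $\varepsilon_n$. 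More importantly, the shrinker identity \eqref{eqn0} in the form $f-\tfrac n2=|\nabla f|^2-\Delta f$ puts a factor of $f$ on the left; the paper arrives at
\[
\tfrac12\int_\Sigma f|A|^pe^{-f}\ \le\ \Big(cK+\tfrac n2-\tfrac p2\Big)\int_\Sigma|A|^pe^{-f},
\]
and this $f$ is what makes the endpoint case work: at $K=\varepsilon_n$ the right side is $\le 0$ while the left side is the integral of the nonnegative quantity $f|A|^p$, forcing $|A|$ to vanish off the origin, hence everywhere. Your version without the $f$ collapses to the vacuous $0\le 0$ at $K=\varepsilon_n$, which is precisely the hypothesis.

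The alternate ``cleaner sub-goal'' $|A|^2\le\tfrac12$ pointwise does not follow from either of your proposed mechanisms, and this is a genuine gap in that branch. The function $u=|A|^2+cH^2$ does satisfy $\Delta_f u\ge(1-2u)u$, but that is a \emph{subsolution} inequality whose reaction term is destabilizing exactly on $\{u>\tfrac12\}$; on a noncompact $\Sigma$ the maximum principle at an interior maximum $x_0$ only tells you $u(x_0)\ge\tfrac12$ or $u(x_0)=0$, the wrong direction, and yields no upper bound. And Moser iteration at unit scale (Lemma~\ref{lem:Moser iteration}) bounds $\sup|A|^2$ in terms of a local $L^q$-norm of $|A|^2$ plus $\|H\|$, not in terms of $K$ alone, so the smallness of $K$ cannot be fed in to force $|A|^2\le\tfrac12$. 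This is the obstruction you correctly identify in your last paragraph, and it is precisely why the paper never attempts a pointwise bound and instead runs the global weighted $L^p$ argument above; the round-cylinder discussion at the end of that branch is fine in isolation but never becomes reachable.
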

\begin{proof}
By virtue of (\ref{eqn2}) and (\ref{eqn3}), we have
\beqn\label{est1}
&&\int_{\Sigma}(f-\frac{n}2)|A|^p e^{-f}
=\int_{\Sigma}\Big(|\nabla f|^2-\Delta f\Big)|A|^p e^{-f}\\
&=&\int_{\Sigma}\Delta (e^{-f})|A|^p
= -\int_{\Sigma}\nabla(e^{-f})\cdot\nabla|A|^p\nonumber=\int_{\Sigma}\nabla f\cdot\nabla|A|^p e^{-f}.\nonumber
\eeqn
Note that $\sup_{\Sigma}|HA|\leq K$.
Using (\ref{eqn4}) and integrating by parts, we have
\beqs
&&\int_{\Sigma}\nabla f\cdot\nabla|A|^p e^{-f}
=p\int_{\Sigma}\nabla f\cdot\nabla A\cdot A |A|^{p-2} e^{-f}\\
&=&p\int_{\Sigma}\Big(\nabla^2 H+HA^2-\frac12 A\Big)A|A|^{p-2}e^{-f}\\
&\leq& p\int_{\Sigma}\nabla^2 H\cdot A |A|^{p-2}e^{-f}+ (pK-\frac{p}2)\int_{\Sigma}|A|^pe^{-f}\\
&\leq& 
p(p-1)\int_{\Sigma}|\nabla H||\nabla A||A|^{p-2}e^{-f}
+p\int_{\Sigma}|\nabla H||\nabla f||A|^{p-1}e^{-f}\\
&&+(pK-\frac{p}2)\int_{\Sigma}|A|^pe^{-f}.
\eeqs

By (\ref{eqn3}) and Schwarz's inequality,
\beqn
&&\int_{\Sigma}\nabla f\cdot\nabla|A|^p e^{-f}\label{est2}\\
&\leq& p(p-1)\int_{\Sigma}|\nabla H||\nabla A||A|^{p-2}e^{-f}
+\frac12\int_{\Sigma}|\nabla f|^2|A|^pe^{-f}\nonumber\\
&&+\frac{p^2}2\int_{\Sigma}|\nabla H|^2|A|^{p-2}e^{-f}
+(pK-\frac{p}2)\int_{\Sigma}|A|^pe^{-f}\nonumber\\
&\leq& p^2(1+\frac{\sqrt{n}}2)\int_{\Sigma}|\nabla H||\nabla A||A|^{p-2}e^{-f}+\frac12\int_{\Sigma}|\nabla f|^2|A|^pe^{-f}\nonumber\\
&&+(pK-\frac{p}2)\int_{\Sigma}|A|^pe^{-f}\nonumber\\
&\leq& p^2(1+\frac{\sqrt{n}}2)K\int_{\Sigma}|\nabla A|^2|A|^{p-4}e^{-f}+\frac14 p^2(1+\frac{\sqrt{n}}2)K^{-1}\int_{\Sigma}|\nabla H|^2|A|^pe^{-f}\nonumber\\
&&+\frac12\int_{\Sigma}f|A|^pe^{-f}+(pK-\frac{p}2)\int_{\Sigma}|A|^pe^{-f}.\nonumber
\eeqn
Plugging (\ref{est2}) into (\ref{est1}) yields 
\beqn
\frac12\int_{\Sigma}f|A|^pe^{-f}
&\leq& (pK+\frac{n}2-\frac{p}2)\int_{\Sigma}|A|^pe^{-f}
+p^2(1+\frac{\sqrt{n}}2)K\int_{\Sigma}|\nabla A|^2|A|^{p-4}e^{-f}\label{est3}\\
&&+p^2(1+\frac{\sqrt{n}}2)K^{-1}\int_{\Sigma}|\nabla H|^2|A|^pe^{-f}.\nonumber
\eeqn
Furthermore, by (\ref{eqn6}) we get, for $p\geq4$,
\beqn
&&\int_{\Sigma}|\nabla A|^2|A|^{p-4}e^{-f}
=\int_{\Sigma}\Big(\frac12\Delta_{f}|A|^2-(\frac12-|A|^2)|A|^2\Big)|A|^{p-4}e^{-f}\nonumber\\
&\leq& -\frac12\int_{\Sigma}\nabla|A|^2\cdot\nabla|A|^{p-4}e^{-f}+\int_{\Sigma}(|A|^p-\frac12|A|^{p-2})e^{-f}\leq \int_{\Sigma}|A|^pe^{-f},\nonumber
\eeqn
i.e.,
\beqn
\int_{\Sigma}|\nabla A|^2|A|^{p-4}e^{-f}\leq \int_{\Sigma}|A|^pe^{-f}.\label{est4}
\eeqn
Similarly, by (\ref{eqn8}) we get
\beqs
&&\int_{\Sigma}|\nabla H|^2|A|^pe^{-f}
=\int_{\Sigma}\Big(\frac12\Delta_{f}H^2+(|A|^2-\frac12)H^2\Big)|A|^pe^{-f}\\
&\leq&-\frac12\int_{\Sigma}\nabla H^2\cdot\nabla|A|^pe^{-f}+\int_{\Sigma}(H^2|A|^{p+2}-\frac12H^2|A|^p)e^{-f}\\
&\leq& p\int_{\Sigma}|\nabla H||\nabla A||H||A|^{p-1}e^{-f}+K^2\int_{\Sigma}|A|^pe^{-f}-\frac12\int_{\Sigma}H^2|A|^pe^{-f}\\
&\leq& \frac12\int_{\Sigma}|\nabla H|^2|A|^pe^{-f}+\frac{p^2}2\int_{\Sigma}|\nabla A|^2H^2|A|^{p-2}e^{-f}+K^2\int_{\Sigma}|A|^pe^{-f}\\
&\leq& \frac12\int_{\Sigma}|\nabla H|^2|A|^pe^{-f}+\frac{p^2}2 K^2\int_{\Sigma}|\nabla A|^2|A|^{p-4}e^{-f}+K^2\int_{\Sigma}|A|^pe^{-f},
\eeqs
which together with (\ref{est4}) implies
\beqn
\int_{\Sigma}|\nabla H|^2|A|^pe^{-f}\leq (p^2+2)K^2\int_{\Sigma}|A|^pe^{-f}.\label{est5}
\eeqn
Finally, combining (\ref{est3}), (\ref{est4}) and (\ref{est5}), we conclude
\beqn
\frac12\int_{\Sigma}f|A|^pe^{-f}\leq (cK+\frac{n}2-\frac{p}2)\int_{\Sigma}|A|^pe^{-f},\label{est12}
\eeqn
where 
$$c=p+p^2(p^2+3)(1+\frac{\sqrt{n}}2).$$
Now we take $p=n+4$. Then $c\leq 2\sqrt{n}(n+5)^4$. If $K\leq \frac1{\sqrt{n}(n+5)^4}$, i.e., a upper bound which depends only on $n$, then the above inequality implies that $A\equiv0$.
\end{proof}

\begin{cor}\label{cor:hyperplane gap}
    Let $\x:\Sigma^n\to\mathbb{R}^{n+1}$ be a smooth properly embedded self-shrinker. There exists a constant $\varepsilon_n=\frac1{\sqrt{n}(n+5)^4}$ such that if $\sup_{\Sigma}|HA|\leq\varepsilon_n$ then $\Sigma$ is a hyperplane through 0.
\end{cor}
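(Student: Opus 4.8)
The plan is to deduce Corollary \ref{cor:hyperplane gap} directly from the gap theorem (Theorem \ref{theo:gap}) together with Lemma \ref{lem:hyperplane}. Set $\varepsilon_n = \frac{1}{\sqrt{n}(n+5)^4}$, exactly the constant appearing in Theorem \ref{theo:gap}, and suppose $\Sigma$ is a smooth properly embedded self-shrinker with $\sup_\Sigma |HA| \le \varepsilon_n$.

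First I would apply Theorem \ref{theo:gap}: since $\sup_\Sigma|HA|\le\varepsilon_n = \frac{1}{\sqrt{n}(n+5)^4}$, the conclusion is that $A\equiv 0$ on $\Sigma$. In particular the mean curvature $H = \tr A$ vanishes identically, so $H\equiv 0$.

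Next I would invoke Lemma \ref{lem:hyperplane} (Corollary 2.8 of \cite{[CM2]}): a self-shrinker with $H\equiv 0$ is a minimal cone, and if it is moreover smooth and embedded, it must be a hyperplane through the origin. Since by hypothesis $\Sigma$ is smooth and properly embedded, this applies verbatim and gives that $\Sigma$ is a hyperplane through $0$, which is the desired conclusion. (Alternatively, one could observe directly that $A\equiv 0$ already forces $\Sigma$ to be a piece of an affine hyperplane, and then the self-shrinker equation $H = \langle \x,\n\rangle/2 = 0$ forces that hyperplane to pass through the origin; but routing through Lemma \ref{lem:hyperplane} is cleaner and handles completeness/properness uniformly.)

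There is essentially no obstacle here: the corollary is a one-line consequence of the two cited results, and the only point to be careful about is that the embeddedness and smoothness hypotheses of Corollary \ref{cor:hyperplane gap} are precisely what Lemma \ref{lem:hyperplane} needs to upgrade "minimal cone" to "hyperplane through $0$." The genuine work was already done in proving Theorem \ref{theo:gap}; this statement merely records its geometric meaning.
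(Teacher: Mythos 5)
Your proof is correct and follows exactly the route the paper takes: apply Theorem~\ref{theo:gap} to get $A\equiv 0$ (hence $H\equiv 0$), then invoke Lemma~\ref{lem:hyperplane} using smoothness and embeddedness to conclude $\Sigma$ is a hyperplane through $0$. The paper's own proof is a one-liner citing the same two results, so you have simply spelled out the intended argument.
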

\begin{proof}
Combining Theorem \ref{theo:gap} and Lemma \ref{lem:hyperplane} we immediately obtain Corollary \ref{cor:hyperplane gap}. 
\end{proof}

From the proof of Theorem \ref{theo:gap} we derive the following energy estimate.
\begin{prop}\label{prop:energy bound}
Let $\x:\Sigma^n\to\mathbb{R}^{n+1}$ be a properly immersed self-shrinker with $n\geq4$ and $\sup_{\Sigma}|HA|\leq K$. Then there exists a $r_1=c_n\sqrt{K}$ such that if 
$$\int_{B(0,r_1)\cap\Sigma}|A|^n\leq E,$$
then for any $r>0$ we have
$$\int_{B(0,r)\cap\Sigma}|A|^n\leq 3Ee^{r^2/4}.$$
\end{prop}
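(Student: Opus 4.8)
The plan is to adapt the weighted $L^p$ computation from the proof of Theorem \ref{theo:gap}, but instead of integrating against the full weight $e^{-f}$ on all of $\Sigma$, I would localize with a cutoff and take $p=n$. Concretely, fix a radial cutoff $\phi(x)=\eta(|x|^2)$ with $\eta$ nonnegative, nonincreasing, Lipschitz, so that $\nabla f\cdot\nabla\phi=4\eta'|\nabla f|^2\le 0$; then the "bad" boundary term created by $\nabla\phi$ has a favorable sign. Using (\ref{eqn4}) exactly as in (\ref{est1})--(\ref{est2}), integration by parts against $e^{-f}\phi$ gives
\beqs
\int_{\Sigma}\Big(f-\frac n2\Big)|A|^p e^{-f}\phi
&=&\int_{\Sigma}\nabla f\cdot\nabla|A|^p\,e^{-f}\phi+\int_{\Sigma}\nabla f\cdot\nabla\phi\,|A|^p e^{-f}\\
&\le&\int_{\Sigma}\nabla f\cdot\nabla|A|^p\,e^{-f}\phi,
\eeqs
and then the same chain of Cauchy--Schwarz/Young steps that produced (\ref{est3})--(\ref{est5}) — all of which only generate extra terms with $\phi$ or $|\nabla\phi|$, the latter again with a sign that can be absorbed since $\nabla\phi=\eta'\cdot(\text{stuff})$ points inward — yields
\beqs
\frac12\int_{\Sigma}f|A|^p e^{-f}\phi\le\Big(cK+\frac n2-\frac p2\Big)\int_{\Sigma}|A|^p e^{-f}\phi
\eeqs
with $c=c(n,p)$ as in Theorem \ref{theo:gap}.

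Next I would use this to set up a differential inequality in $r$. With $p=n$ the term $\frac n2-\frac p2$ vanishes, so the inequality reads $\frac12\int f|A|^n e^{-f}\phi\le cK\int|A|^n e^{-f}\phi$. Choosing the cutoff $\eta$ to be the indicator-type function supported on $[0,r^2]$ (or smoothing it and passing to the limit), and writing $V(r):=\int_{B(0,r)\cap\Sigma}|A|^n e^{-f}$, I would extract from the localized identity the statement that for $r\ge r_1:=2\sqrt{cK}$ (so that $f=|x|^2/4\ge cK$ on the annulus outside $B(0,r_1)$) the contribution of the region $\{|x|\ge r_1\}$ is dominated by the contribution on $B(0,r_1)$; more precisely, one gets $\int_{(B(0,r)\setminus B(0,r_1))\cap\Sigma}(f-cK)|A|^n e^{-f}\le cK\int_{B(0,r_1)\cap\Sigma}|A|^n e^{-f}$, and since $f-cK\ge 0$ out there this bounds $V(r)-V(r_1)$ in terms of $V(r_1)$. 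Feeding in $\int_{B(0,r_1)\cap\Sigma}|A|^n\le E$ and $e^{-f}\le 1$ gives $V(r_1)\le E$, hence $V(r)\le 2E$ say; finally undoing the weight on $B(0,r)$ via $e^{-f}\ge e^{-r^2/4}$ gives
\beqs
\int_{B(0,r)\cap\Sigma}|A|^n\le e^{r^2/4}\int_{B(0,r)\cap\Sigma}|A|^n e^{-f}=e^{r^2/4}V(r)\le 3Ee^{r^2/4}.
\eeqs
The constant $r_1=c_n\sqrt K$ emerges precisely from the threshold where $f$ overtakes the $cK$ coming from the error terms, with $c_n$ depending on the $c=c(n,n)$ of Theorem \ref{theo:gap}.

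The main obstacle is bookkeeping the cutoff through the nested integrations by parts in (\ref{est2})--(\ref{est5}): each step that was clean with the global weight now leaves a term involving $\nabla\phi$ or $\phi^{-1}|\nabla\phi|^2$, and I need to make sure every one of these either carries the good sign (true for the first-order $\nabla f\cdot\nabla\phi$ terms) or can be absorbed into $\int f|A|^n e^{-f}\phi$ and $\int|A|^n e^{-f}\phi$ without destroying the clean constant; this is exactly the kind of estimate that was done in full in Proposition \ref{prop:Lp est}, so the degree-counting is under control, but one must choose the exponent in $|\eta'|\le C r^{-2}\eta^{1-\delta}$ and take $\delta$ small enough (as there) that no term of order $|A|^n$ with a bad constant survives. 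A secondary subtlety is that the raw inequality only controls the weighted energy outside $B(0,r_1)$ in terms of the weighted energy inside; converting this into the advertised additive/multiplicative bound $V(r)\le 2E$ (rather than, say, $V(r)\le C_n E$) requires that the coefficient multiplying $V(r_1)$ on the right be genuinely $\le 1$ after the threshold choice, which is what pins down $r_1=c_n\sqrt K$ with the right $c_n$, and then the factor $3$ (versus $2$) absorbs the smoothing error from replacing the sharp cutoff by a Lipschitz one.
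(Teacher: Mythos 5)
Your core idea is the same as the paper's: feed the weighted $L^p$ inequality $\frac12\int_\Sigma f|A|^ne^{-f}\le cK\int_\Sigma|A|^ne^{-f}$ from the gap theorem proof into a splitting of the domain into an inner ball and its complement, then undo the Gaussian weight. Two things in your writeup need fixing, one minor and one substantive.

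The minor point is that the localized re-derivation of (\ref{est1})--(\ref{est5}) against $e^{-f}\phi$ is unnecessary. The paper's Proposition~\ref{prop:energy bound} simply invokes (\ref{est12}) as a finished global statement; the cutoff justification of the integration by parts has already been done once (and its error terms sent to zero as $r\to\infty$) by way of Theorem~\ref{theo:growth rate} and Lemma~\ref{lem:vol est}, so there is nothing to be gained by redoing it here with a cutoff that is \emph{not} sent to infinity. Worse, several of the extra terms you would generate, e.g.\ $\int|\nabla H||\nabla\phi||A|^{p-1}e^{-f}$ coming from the nested estimate (\ref{est10}), have no sign and would have to be absorbed by Young, reintroducing exactly the $r^{-p}$-type errors of Proposition~\ref{prop:Lp est} which you then cannot drop at finite $r$. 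Just cite (\ref{est12}).

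The substantive gap is the threshold. Writing $b=cK$ (for $p=n$), the inequality (\ref{est12}) reads $\int_\Sigma(f-2b)|A|^ne^{-f}\le 0$, so the favorable sign on the outer region begins only at $f=2b$, not at $f=b$. With your choice $r_1=2\sqrt{cK}$, i.e.\ cutting at $f=b$, the integrand $(f-2b)|A|^ne^{-f}$ is still \emph{negative} on the annulus $\{b\le f\le 2b\}$, so the claimed bound $\int_{(B(0,r)\setminus B(0,r_1))\cap\Sigma}(f-cK)|A|^ne^{-f}\le cK\int_{B(0,r_1)\cap\Sigma}|A|^ne^{-f}$ does not give you a lower bound for $\int_{\{|x|\ge r_1\}}|A|^ne^{-f}$: you cannot divide out a coefficient that degenerates to zero (or turns negative) on part of the region. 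The paper resolves this by cutting strictly above $2b$: with the split at $f=3b$ one has $f-2b\ge b>0$ on the outer piece, which gives cleanly $\int_{\{f\ge 3b\}}|A|^ne^{-f}\le 2\int_{\{f\le 3b\}}|A|^ne^{-f}$, and that forces $r_1=2\sqrt{3b}=2\sqrt{3c}\,\sqrt K$. You sense this subtlety (``which is what pins down $r_1=c_n\sqrt K$ with the right $c_n$''), but the mechanism is the strict buffer between $2b$ and the cut-level, not the requirement that the coefficient in front of $V(r_1)$ be $\le 1$ (the paper's coefficient is in fact $2$). Finally, the $3$ in the conclusion is simply $2e^{r^2/4}+1\le 3e^{r^2/4}$ coming from the additive split of $\int_{\{|x|\le r\}}|A|^n$ into the two regions; it is not absorbing a cutoff-smoothing error.
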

\begin{proof}
    Set $b=cK+\frac{n}2-\frac{p}2$ as long as it is positive.
    Dividing the integration in (\ref{est12}) into two parts, we see
    \beqs
    &&\int_{\{f\leq 3b\}\cap\Sigma}f|A|^p e^{-f}
    +3b\int_{\{f\geq 3b\}\cap\Sigma}|A|^p e^{-f}\\
    &\leq& 2b\int_{\{f\leq 3b\}\cap\Sigma}|A|^p e^{-f}
    +2b\int_{\{f\geq 3b\}\cap\Sigma}|A|^p e^{-f},
    \eeqs
    which implies
    \beqs
    \int_{\{f\geq 3b\}\cap\Sigma}|A|^p e^{-f}
    \leq 2\int_{\{f\leq 3b\}\cap\Sigma}|A|^p.
    \eeqs
    Moreover, for any $r>0$, 
    \beqs
    &&\int_{\{|x|\leq 2r\}\cap\Sigma}|A|^p
    =\int_{\{f\leq r^2\}\cap\Sigma}|A|^p\\
    &\leq& e^{r^2}\int_{\{f\geq 3b\}\cap\Sigma}|A|^p e^{-f}
    +\int_{\{f\leq 3b\}\cap\Sigma}|A|^p\\
    &\leq& (2e^{r^2}+1)\int_{\{f\leq 3b\}\cap\Sigma}|A|^p,
    \eeqs
    i.e.,
    \beqs
    \int_{\{|x|\leq r\}\cap\Sigma}|A|^p
    &\leq& (2e^{r^2/4}+1)\int_{\{|x|\leq 2\sqrt{3b}\}\cap\Sigma}|A|^p\\
    &\leq& 3e^{r^2/4}\int_{\{|x|\leq 2\sqrt{3cK+3(n-p)/2}\}\cap\Sigma}|A|^p.
    \eeqs
    Letting $p=n$ yields the energy estimate we want.
\end{proof}

Combining the volume estimate and the energy bound, we derive the following compactness theorem for self-shrikers, which largely follows the techniques on minimal surfaces. Remark that here we only need a local energy bound instead of a global bound.

\begin{theo}[\textbf{Compactness}]\label{theo:compactness}
Let $\{\Sigma^n_i\}$ be a sequence of properly embedded self-shrinkers with $n\geq4$ normalized by $\int_{\Sigma_i}e^{-f}\leq (4\pi)^{n/2}$. Assume that $\sup_{i}\sup_{\Sigma_i}|HA|\leq K$ and $\sup_{i}\int_{B(0,r_1)\cap\Sigma_i}|A|^n<\infty$ where $r_1=c_n\sqrt{K}$ is the positive constant in Proposition \ref{prop:energy bound}. Then a subsequence of $\{\Sigma_i\}$ converges smoothly to a smooth properly embedded self-shrinker $\Sigma_{\infty}$.
\end{theo}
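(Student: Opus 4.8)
The plan is to follow the standard compactness scheme for self-shrinkers (compare \cite{[LW]}): isolate a locally finite set $\mathcal S$ where the $L^n$-norm of $A$ concentrates, extract a subsequence converging smoothly off $\mathcal S$, and then show $\mathcal S$ is empty. First I would collect the uniform a priori bounds: by Lemma \ref{lem:vol est} the normalization $\int_{\Sigma_i}e^{-f}\le(4\pi)^{n/2}$ gives $\mathrm{Vol}(B(0,r)\cap\Sigma_i)\le C_nr^n$ for all $r>0$ uniformly in $i$, while $|HA|\le K$ and (\ref{eqn2}) give $\sup_{\Sigma_i}|H|\le c_n\sqrt K=:\Lambda$; since $n\ge4$, Proposition \ref{prop:energy bound} upgrades the hypothesis $\int_{B(0,r_1)\cap\Sigma_i}|A|^n\le E$ to the global bound $\int_{B(0,r)\cap\Sigma_i}|A|^n\le3Ee^{r^2/4}$ for all $r>0$ and all $i$. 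After passing to a subsequence and letting $\epsilon=\epsilon(n)>0$ be the constant of Lemma \ref{lem:epsilon-regularity}, I set
\[
\mathcal S:=\Big\{\,p\in\mathbb R^{n+1}\ :\ \liminf_{i\to\infty}\int_{B(p,\rho)\cap\Sigma_i}|A|^n\ge\epsilon\ \text{ for every }\rho>0\,\Big\},
\]
and a Vitali covering argument together with the global energy bound shows $\mathcal S\cap B(0,R)$ is finite (with at most $\sim Ee^{R^2/4}/\epsilon$ points) for every $R$; thus $\mathcal S$ is closed and locally finite.

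Next I would extract the limit away from $\mathcal S$. For $p\notin\mathcal S$ there are $\rho\in(0,1/\Lambda]$ and a subsequence with $\int_{B(p,\rho)\cap\Sigma_i}|A|^n<\epsilon$, so the parabolic part of Lemma \ref{lem:epsilon-regularity}, applied to the mean curvature flows $\Sigma_{i,t}=\sqrt{-t}\,\Sigma_i$, gives uniform bounds on $|A|$ and on every $|\nabla^kA|$ on a fixed parabolic neighbourhood of $(p,-1)$; Lemma \ref{lem:compactness of MCF} then yields smooth subsequential convergence there, whose time $-1$ slice is a smooth self-shrinker. Exhausting $\mathbb R^{n+1}\setminus\mathcal S$ by compacta and diagonalizing, I obtain a subsequence with $\Sigma_i\to\Sigma_\infty$ in $C^\infty_{\mathrm{loc}}(\mathbb R^{n+1}\setminus\mathcal S)$, where $\Sigma_\infty$ is a smooth embedded self-shrinker there; using the uniform area bound I may also assume $\Sigma_i\to\Sigma_\infty$ as integral varifolds on all of $\mathbb R^{n+1}$, the uniform polynomial volume growth preventing any loss of weighted volume at infinity, so that $\int_{\Sigma_\infty}e^{-f}\le(4\pi)^{n/2}$ and, by Fatou, $\int_{B(0,r)\cap\Sigma_\infty}|A|^n\le3Ee^{r^2/4}$ for all $r$.

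The hard part is showing $\mathcal S=\emptyset$. My plan is a blow-up argument at a putative point $p\in\mathcal S$: choose $\rho_i\to0$ with $\int_{B(p,\rho_i)\cap\Sigma_i}|A|^n\ge\epsilon/2$ and, after a point-selection (rescaling) step, pass the parabolic dilations of $\{\Sigma_{i,t}\}$ about $(p,-1)$ at scale $\rho_i$ to a self-similar limiting flow $\Sigma'$ — the limit being controlled by Huisken's monotonicity formula, whose density ratios are bounded here since the entropy is $\le1$. The point-selection makes $\Sigma'$ non-flat (it carries a definite amount of $\int|A|^n$ on a fixed ball), while the scale invariance of $\int|A|^n\,dv$ in dimension $n$ transfers the finite energy bound to $\Sigma'$. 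But $|H|$ scales like the reciprocal of the dilation factor, hence to $0$, so $\Sigma'$ satisfies $H\equiv0$; by Lemma \ref{lem:hyperplane} the smooth part of $\Sigma'$ is then a minimal cone, and since a minimal cone smooth at the vertex is a hyperplane, non-flatness forces a genuine conical singularity at $0$, near which $|A|\gtrsim|x|^{-1}$ and therefore $\int_{B(0,1)\cap\Sigma'}|A|^n=\infty$, contradicting the finiteness just established. Hence $\mathcal S=\emptyset$, the convergence $\Sigma_i\to\Sigma_\infty$ is smooth on all of $\mathbb R^{n+1}$, $\Sigma_\infty$ is a smooth self-shrinker, the entropy normalization forces the convergence to have multiplicity one (a self-shrinker satisfies $\int e^{-f}\ge(4\pi)^{n/2}$) so $\Sigma_\infty$ is embedded, and it is properly embedded because $\mathrm{Vol}(B(0,r)\cap\Sigma_\infty)\le C_nr^n$, equivalently $\int_{\Sigma_\infty}e^{-f}<\infty$, by Lemma \ref{lem:vol est} and \cite{[CZ]}. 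The main obstacle throughout is this last paragraph: carrying out the blow-up carefully enough (point-selection) to produce a nontrivial limiting shrinker while preserving both the scale-invariant energy bound and $\sup|H|\to0$, so that Lemma \ref{lem:hyperplane} and the infinite $\int|A|^n$ of non-flat minimal cones close the argument; the rest is the cited estimates plus routine covering and diagonal arguments.
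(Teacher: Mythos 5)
Your first two paragraphs --- the uniform bounds from Lemmas~\ref{lem:vol est}, \ref{lem:volume ratio lower bound} and Proposition~\ref{prop:energy bound}, the concentration set defined via the $\epsilon$ of Lemma~\ref{lem:epsilon-regularity}, and the diagonal extraction of a smooth limit on the complement using Lemma~\ref{lem:compactness of MCF} --- follow the same route as the paper. The divergence, and the gap, is in your third paragraph, where you try to show $\mathcal S=\emptyset$ outright.

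The blow-up argument misapplies Lemma~\ref{lem:hyperplane}. That lemma asserts that a \emph{self-shrinker} with $H\equiv0$ is a minimal cone; the cone structure comes from the shrinker identity $\langle x,\n\rangle=2H=0$, not from minimality alone. Your rescaled limit $\Sigma'$ is a minimal hypersurface (the dilations destroy the shrinker equation, since it is not scale invariant), so Lemma~\ref{lem:hyperplane} gives you nothing: a non-flat complete minimal hypersurface is in general not a cone and can have finite scale-invariant total curvature (the catenoid in $\RR^3$, with $\int|A|^2<\infty$, is the standard example). Consequently the chain ``non-flat $\Rightarrow$ genuine cone singularity $\Rightarrow$ $|A|\gtrsim|x|^{-1}$ $\Rightarrow$ $\int_{B(0,1)}|A|^n=\infty$'' does not close, and there is no contradiction. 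There is also a secondary issue with the point-selection step: producing a non-flat complete bubble while simultaneously retaining both the scale-invariant energy bound and the bounded entropy requires a careful no-loss/no-collapse argument that you only sketch. In short, the assertion $\mathcal S=\emptyset$ is not established.

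The paper avoids this altogether. It does not try to empty the concentration set $S$; instead it uses a removable-singularity argument (the one in Proposition 7.14 of~\cite{[CM]}, via the locally uniform area bound and the minimal-surface-in-a-conformal-metric viewpoint) to conclude $\Sigma_\infty\cup S$ is a smooth properly embedded self-shrinker with Hausdorff convergence, and then rules out higher multiplicity by the Colding--Minicozzi L-stability argument: if the multiplicity were $\geq2$ the height difference would produce a positive $L$-Jacobi field, making $\Sigma_\infty$ L-stable, which contradicts Theorem 0.5 of~\cite{[CM1]} together with Lemma~\ref{lem:vol est}. Your alternative multiplicity-one observation --- that the normalization $\int_{\Sigma_i}e^{-f}\leq(4\pi)^{n/2}$ together with the lower bound $\int_{\Sigma_\infty}e^{-f}\geq(4\pi)^{n/2}$ for smooth self-shrinkers forces multiplicity one --- is a reasonable shortcut (it rests on the entropy lower bound and on no mass escaping to infinity or concentrating on the finite set $S$), but it does not rescue the argument, because smoothness of the limit across $S$ still has to be proven; without the removable-singularity step your $\Sigma_\infty$ is only known to be smooth on $\RR^{n+1}\setminus\mathcal S$, and the theorem's conclusion of smooth convergence to a smooth shrinker is not reached.
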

\begin{proof}
Fix $B(0,\rho)\subset\mathbb{R}^{n+1}$. Assume that
$\int_{B(0,r_1)\cap\Sigma_i}|A|^n\leq E<\infty$. By Proposition \ref{prop:energy bound} we can define the measures $\nu_i$ on $B(0,\rho)$ by 
$$\nu_i(U):=\int_{U\cap B(0,\rho)\cap\Sigma_i}|A|^n\leq 3Ee^{\rho^2/4},\quad\forall\,U\subset B(0,\rho).$$
Then a subsequence converges weakly to a Radon measure $\nu$ with $\nu(B(0,\rho))\leq 3Ee^{\rho^2/4}$. We define the set 
$$S_{\rho}:=\{x\in B(0,\rho)\,|\,\nu(x)\geq \epsilon\},$$
where $\epsilon=\epsilon(n)$ is the positive constant in Lemma \ref{lem:epsilon-regularity} and see the number estimate
$\sharp\{S_{\rho}\}\leq \frac{3E}{\epsilon}e^{\rho^2/4}$. For any $x_0\in B(0,\rho)\setminus S_{\rho}$ there exists some $r\in(0,\frac{1}{n^{1/4}\sqrt{K}})$ such that $B(x_0,r)\subset B(0,\rho)\setminus S_{\rho}$ with $\nu(B(x_0,r))<\epsilon$. For $i$ sufficiently large we have
$$\int_{B(x_0,r)\cap\Sigma_i}|A|^n\leq\epsilon.$$
Applying Lemma \ref{lem:epsilon-regularity} yields interior estimates
$$\sup_{B(x_0,R/2)\cap\Sigma_i}|A|\leq \frac{D_k}{r},\quad\forall\,k\geq0,$$
where $R=R(r,n)<\frac{r}{32}$ and $D_k=D_k(n)$. By Lemma \ref{lem:compactness of MCF} and a diagonal sequence argument we find a subsequence of $\{B(0,\rho)\cap\Sigma_i\}$ converges smoothly, away from $S_{\rho}$, to a properly embedded self-shrinker $\Sigma_{\infty}$ in $B(0,\rho)$. Furthermore, we find a subsequence of $\{\Sigma_i\}$ converges in smooth topology, away from $S:=\bigcup_{\rho>0}S_{\rho}$, to a properly embedded self-shrinker $\Sigma_{\infty}$. By Lemma \ref{lem:vol est} and Lemma \ref{lem:volume ratio lower bound} the multiplicity of the convergence is bounded by $N_0=N_0(n,K)<\infty$. Note that $\Sigma_{\infty}$ is a minimal hypersurface with some conformal metric. Following the argument in the proof of Proposition 7.14 of \cite{[CM]} we see that $\Sigma_{\infty}\cup S$ is a smooth properly embedded self-shrinker and the convergence is also in Hausdorff distance.

Finally it boils down to the multiplicity-one convergence as in \cite{[CM1]}. Suppose for the sake of contradiction that $u_i$ denotes the normalized height-difference between the top and bottom sheets. In fact $\{u_i\}$ satisfies $Lu_i=0$ up to higher order correction terms and converges on $\Sigma_{\infty}\setminus S$ to a solution $u$ with $Lu=0$. Applying the foliation argument and local maximum principle in a cylindrical neighbourhood of a singularity $y\in S$, we see that $u_i$ is bounded on a neighbourhood of $y$ by a mutiple of its supermum on the boundary. Hence $u$ is bounded over each $y\in S$ and then extends to a smooth positive solution on $\Sigma_{\infty}\cup S$ which actually implies L-stability. However, there are no L-stable smooth properly embedded self-shrinkers according to Lemma \ref{lem:vol est} and Theorem 0.5 of \cite{[CM1]}. See more details in \cite{[CM1],[Sh]}.
\end{proof}

\end{document}